\newtheorem{theorem}{Theorem}[section]
\newtheorem{corollary}[theorem]{Corollary}
\newtheorem{example}[theorem]{Example}
\newtheorem{lemma}[theorem]{Lemma}
\newtheorem{proposition}[theorem]{Proposition}
\newcommand{\modulo} [2] {{#1} \operatorname{mod} {#2} }
\newcommand{\Shift}{\operatorname{Shift}}
\newcommand{\cycle}{\operatorname{cycle}}
\newcommand{\pos}{\operatorname{pos}}
\newcommand{\Circulante}{\operatorname{Circ}}
\newcommand{\Ind}[1]{\operatorname{ind}(#1)}
\newcommand{\rank}{\operatorname{rank}}
\newcommand{\dnull}[1]{\operatorname{null}(#1)}
\newcommand{\Null}[1]{\operatorname{Null}\left(#1\right)}
\newcommand{\Rank}[1]{\operatorname{Rank}(#1)}
\newcommand{\PCirc}[5]{#4 P_{#1}^{#2}+ #5 P_{#1}^{#3}}
\newcommand{\PCircN}[4]{#3 I_{#1} + #4 P_{#1}^{#2}}
\newcommand{\perm}{\operatorname{perm}}
\begin{document}
	
	\begin{abstract}
		This work presents closed formulas for determinant,  permanent,  inverse, and Drazin inverse of circulant matrices with two non-zero coefficients.
	\end{abstract} 

\begin{keyword}
	Circulant matrices\sep
	determinant\sep
	permanent\sep
	Drazin inverse\sep
	directed weighted cycles.
	\MSC 15A09 \sep 05C38
\end{keyword}

\begin{frontmatter}
	\title{Using digraphs to compute determinant, permanent and Drazin inverse of circulant matrices with two parameters}
	
	\author[ame]{Andr\'es M. Encinas} 
\ead{andres.marcos.encinas@upc.edu}

\author[daj]{Daniel A. Jaume}
\ead{djaume@unsl.edu.ar}


\author[daj]{Cristian Panelo}
\ead{crpanelo@unsl.edu.ar}

\author[dev]{Denis E.\@ Videla}
\ead{devidela@famaf.unc.edu.ar}


\address[daj]{Departamento de Matem\'{a}ticas. 
	Facultad de Ciencias F\'{\i}sico-Matem\'{a}ticas y Naturales. Universidad Nacional de San Luis. San Luis, Argentina} 
\address[dev]{FaMAF - CIEM (CONICET), Universidad Nacional de C\'ordoba.}
\address[ame]{Departament de Matem\`atiques. Universitat Polit\`ecnica de Catalunya. Barcelona, Spain.}

\date{Received: date / Accepted: date}
\end{frontmatter}

\section{Introduction}

Circulant matrices appear in many applications, for example, to approximate the finite difference of elliptic equations with periodic boundary, and to approximate periodic functions with splines. The circulant matrices play an important role in coding theory and statistic. The standard reference is \cite{davis2012circulant}.

Among the main problems associated with circulant matrices are those of determining invertibility conditions and computing their Drazin inverse. These problems have been widely treated in the literature by using the primitive  $n$--th root of unity and some polynomial associated with the circulant matrices,  see \cite{davis2012circulant}, \cite{carmona2015inverses}, and  \cite{shen2011determinants}. There exist some classical and well-known results that enable us to solve almost everything we would raise about the inverse or Drazin inverse of circulant matrices. Nevertheless, when we deal with specific families of circulant matrices these classical results give us unmanageable formulas. Therefore, it has interesting to look for alternative descriptions, and in fact, there exist many papers devoted to this question. The direct computation for the inverse of some circulant matrices have been proposed in many works, see for example \cite{fuyong2011inverse},  \cite{shen2011determinants}, \cite{yazlik2013inverse}, \cite{jiang2014invertibility}, \cite{carmona2015inverses}, and \cite{radivcic2016k} (in chronological order).

Besides, the combinatorial structure of circulant matrices also has deserved attention. The graphs whose adjacency matrix is circulant had been study in many works, specially those with integral spectrum, see for example \cite{so2006integral}, \cite{bavsic2009clique}, \cite{ilic2011new}, \cite{petkovic2011further}, and  \cite{sander2018kernel}.

In this work, we delve into the combinatorial structure of circulating matrices with only two non-null generators, by considering the digraphs associated with this kind of matrices. Therefore, we complete a previous work of some of the authors, see  \cite{encinas2019Circulant}, where only a specific class of these matrices was considered.

We use digraphs in the present work, for all of graph-theoretic notions not explicitly defined here, the reader is referred to~\cite{bang2008digraphs}. 

We warn the reader that, as is usual in circulant matrices, our matrices indexes and permutations start at zero. Hence, permutations in this work are bijections over \(\{0,\dots, n-1\}\), and if 
\[
A=\left[
\begin{matrix}
1 & 2 \\
3 & 4 
\end{matrix}
\right],
\]
then the set of indexes of \(A\) is \(\{0,1\}\). So \(A_{0\,0}=1\) and \(A_{1\,0}=3\). This is also why in the present work \([n]\) denotes the set \(\{0,\dots, n-1\}\) instead of \(\{1,\dots, n\}\). 
 
For permutations, we use the cyclic notation: 
\((0\;\;2\;\;1\;\;4)(5\;\;6)\) is the permutation (in row notation) \((2\;\;4\;\;1\;\;3\;\;0\;\;6\;\;5)\), see \cite{rotman2012introduction}. Given a permutation \(\alpha\) of \([n]\), we denote by \(P_{\alpha}\) the \(n\times n\) matrix defined by \((P_{\alpha})_{\alpha(j)\,j}=1\) and \(0\) otherwise. The matrix \(P_{\alpha}\) is known as the permutation matrix associated to \(\alpha\). It is well-known that \(P_{\alpha}^{-1}=P_{\alpha}^{T}\). The assignation $\alpha\mapsto P_{\alpha}$ from the Symmetric Group  $\mathbb{S}_n$ to the General Lineal Group $\mathrm{GL}(n)$ is a representation of $\mathbb{S}_n$, i.e., $P_{\alpha\beta}=P_{\alpha}P_{\beta}$ where product is composition. 

The cycle type of a permutation \(\alpha\) is an expression of the form \[\left(1^{m_{1}}, 2^{m_{2}},\dots, n^{m_{n}} \right),\] where \(m_{k}\) is the number of cycles of length \(k\) in \(\alpha\). It is well-known that the conjugacy classes of \(\mathbb{S}_{n}\) are determined by the cycle type, see page 3 of \cite{sagan2013symmetric}. Thus, \(\alpha\) and \(\beta\) are conjugated (i.e. there exist a permutation \(\sigma\) such that \(\sigma\alpha \sigma^{-1}=\beta\)) if and only if \(\alpha\) and \(\beta\) have the same cycle type.
   
We use the matrix associated to the permutation
\[\tau_{n}=(n-1 \;\;\; n-2 \; \cdots \;2 \;\;1 \;\;0)\]
many times along this work, so instead of \(P_{\tau_{n}}\) we just write \(P_{n}\).

Notice that, for \(k\in \mathbb{Z}\), we have that \(P_{n}^{0}=I_{n}\), \(P_{n}^{k}= P_{n}^{\modulo{k}{n}}=P_{\tau_{n}^{k}}\), \(\det(P_{n})=(-1)^{n-1}\), and  \(\left(P_{n}^{k}\right)^{-1}=P_{n}^{n-k}\). Moreover \(\tau_{n}^{k}(i)=\modulo{i-k}{n}\).

A matrix \(C=(c_{i\, j})\) is named circulant with parameters \(c_{0}, c_{1}, \dots , c_{n-1}\) if
\[
C =
\left[ 
\begin{array}{cccc}
	c_{0} & c_{1} & \dots & c_{n-1} \\
	c_{n-1} & c_{0} & \dots & c_{n-2} \\
	\vdots & \vdots & \ddots & \vdots \\
	c_{1} & c_{2} & \dots & c_{0}
\end{array}
\right] = \Circulante (c_{0},\dots,c_{n-1})
\]
or equivalently
\[
c_{i\, j} = c_{\modulo{j-i}{n}}.
\]
We have that
\[
\Circulante (c_{0},\dots,c_{n-1}) = c_{0}I_{n}+\cdots+c_{n-1}P_{n}^{n-1},
\]
the numbers \(c_{0},\dots,c_{n-1}\) are called the parameters of the circulant matrix \(C\).

Let \(A\) be an \(m\times n\) matrix and let \(S\subseteq [m]\), \(T\subseteq [n]\). The submatrix of $A$ obtained by deleting the rows in \(S\) and the columns in \(T\) is denoted by \(A(S|T)\).
    
The paper is organized as follows. In Section \ref{subsec_Untangling}, we present our main idea about how to work with circulant matrices with just two non-zero parameters: untangling the associated digraphs. In Section \ref{Sec_Sigma}, we explicitly find the matrices that untangle the digraphs associated with our matrices. In Section \ref{Sec_Det_Perm_aP_bP}, we find explicit formulas for determinant and permanent of circulant matrices with two non-zero parameters. In Section \ref{Sec_Inv_aP_bP}, we give an explicit formula for the inverse of non-singular circulant matrices with two non-zero coefficients. In Section \ref{Sec_Inv_Dra_aP_bP}, we give an explicit formula for the Drazin inverse of singular circulant matrices with two non-zero parameters. Finally, in Sections \ref{Sec_Blck_Circ}, \ref{Sec_Blck_Circ_Det}, and \ref{Sec_Blck_Circ_Det_Inv} we generalized the previous results for block circulant matrices.
%


\medskip

%
%

\section{Untangling the skein: two key permutations} \label{subsec_Untangling}

Let $n,s_{1},s_{2}$ and \(s\) be non-negative integers such that $0\leq s_{1} < s_{2} < n$ and \(0 <s<n\). Let $a,b$ be non-zero complex numbers. We are going to work with the following type of \(n\times n\) circulant matrices:
\begin{equation}
\PCirc{n}{s_{1}}{s_{2}}{a}{b}.
\end{equation}
We call it a circulant matrix with two parameters.

There are \(\binom{n}{2}\) forms of circulant matrices with two parameters. Since 
\(\PCirc{n}{s_{1}}{s_{2}}{a}{b}=P_{n}^{s_{1}} \left( \PCircN{n}{s_{2}-s_{1}}{a}{b}\right)\) and 
\( \PCircN{n}{n-s_{1}}{a}{b}=\left( \PCircN{n}{s_{1}}{a}{b}\right)^{T} \), 
it looks like we only need to understand \((n-1)/2\) of them. As we will see later, it is enough with just one of them: \(aI_{n}+bP_{n}\).

As usual in number theory, the greatest common divisor of two integers 
\(n\) and \(s\) is denoted by \((n,s)\). We find useful the following notation
\[
n\backslash s:=\tfrac{n}{(n,s)}.
\] 
We read it ``n without s''. Notice that \(s \, (n\backslash s) = n \, (s \backslash n) = [n,s]\), where \([n,s]\) is the lowest common multiple of \(n\) and \(s\). The rest of the integer division of \(x\) by \(n\) is denoted by \(\modulo{x}{n}\).

Let $A=\left( a_{i\,j}\right) $ be a matrix of order $n$. It is usual to associate a digraph of order \(n\) to $A$, denoted by $D\!\left(A\right)$, see \cite{brualdi2008combinatorial}. The vertices of $D\!\left(A\right) $ are labeled by the integers $\left\{0,1,...,n-1\right\} $. If \(a_{i\,j} \neq 0\), there is an arc from the vertex $i$ to the vertex $j$ of weight $a_{i\,j}$. Whether \(\sigma\) is a permutation, we just write \(D(\sigma)\) instead of \(D\left(P_{\sigma}\right)\), and we talk about digraphs and permutations associated.

In this section, we show that \(aP_{n}^{s_{1}}+bP_{n}^{s_{2}}\) is associated with a digraph who has
\((n,s_{2}-s_{1})\) main cycles of length \(n\backslash(s_{2}-s_{1})\) each, 
and we give two permutations that allow to untangle this digraphs, in a sense that will be clear later.

The first untangle is given  by the permutation associated with the matrix \(P_{n}^{s_{1}}\), because \(aP_{n}^{s_{1}}+bP_{n}^{s_{2}}= P_{n}^{s_{1}}\left( aI_{n}+bP_{n}^{s_{2}-s_{1}}\right)\). Therefore, we just need to study only digraphs associated to matrices of the form \(aI_{n}+bP_{n}^{s}\). In Figures \ref{D806_0} and \ref{D806_1} it can be seen how this permutation untangle these digraphs.

Notice that  \(D\left(aI_{n}+bP_{n}^{s} \right) \) has \((n,s)\) connected components of order \(n \backslash s\), each of them has a main spanning cycle. Notice that for  \(u,v\in [n]\), in \(D\left(aI_{n}+bP_{n}^{s} \right)\) we have a directed arc from \(u\) to \(v\) if and only if \(v-u=\modulo{s}{n}\).

\begin{figure}[h]
	\centering
	\begin{tikzpicture}[scale=0.5,commutative diagrams/every diagram]
	\node[draw,circle,scale=0.6] (a) at (0,7.24)  {0} ;
	\node[draw,circle,scale=0.6] (b) at (3,7.24) {1} ;
	\node[draw,circle,scale=0.6] (c) at (5.12,5.12) {2} ;
	\node[draw,circle,scale=0.6] (d) at (5.12,2.12) {3} ;
	\node[draw,circle,scale=0.6] (e) at (3,0) {4} ;
	\node[draw,circle,scale=0.6] (f) at (0,0) {5} ;
	\node[draw,circle,scale=0.6] (g) at (-2.12,2.12) {6} ;
	\node[draw,circle,scale=0.6] (h) at (-2.12,5.12) {7} ;
	
	\begin{scope}[commutative diagrams/.cd, every arrow, every label]
	
	\draw[blue] (a) to[bend right] node[swap] (aux) {$b$} (h);
	\draw[blue] (h) to[bend right] node[swap] (aux) {$b$} (g);
	\draw[blue] (g) to[bend right] node[swap] (aux) {$b$} (f);
	\draw[blue] (f) to[bend right] node[swap] (aux) {$b$} (e);
	\draw[blue] (e) to[bend right] node[swap] (aux) {$b$} (d);
	\draw[blue] (d) to[bend right] node[swap] (aux) {$b$} (c);
	\draw[blue] (c) to[bend right] node[swap] (aux) {$b$} (b);
	\draw[blue] (b) to[bend right] node[swap] (aux) {$b$} (a);
	\draw[red] (a) to[bend right] node[swap] (aux) {$a$} (b);
	\draw[red] (b) to[bend right] node[swap] (aux) {$a$} (c);
	\draw[red] (c) to[bend right] node[swap] (aux) {$a$} (d);
	\draw[red] (d) to[bend right] node[swap] {$a$} (e);
	\draw[red] (e) to[bend right] node[swap] {$a$} (f);
	\draw[red] (f) to[bend right] node[swap] {$a$} (g);
	\draw[red] (g) to[bend right] node[swap] {$a$} (h);
	\draw[red] (h) to[bend right] node[swap] {$a$} (a);
	
	\end{scope}
	
	
	\def\movimiento{11}
	\node[draw,circle,scale=0.6] (a) at (0+\movimiento,7.24)  {0} edge [loop above,red] node {$a$} ();
	\node[draw,circle,scale=0.6] (b) at (3+\movimiento,7.24) {1} edge [loop above,red] node {$a$} ();
	\node[draw,circle,scale=0.6] (c) at (5.12+\movimiento,5.12) {2} edge [loop right,red] node {$a$} ();
	\node[draw,circle,scale=0.6] (d) at (5.12+\movimiento,2.12) {3} edge [loop right,red] node {$a$} ();
	\node[draw,circle,scale=0.6] (e) at (3+\movimiento,0) {4} edge [loop below,red] node {$a$} ();
	\node[draw,circle,scale=0.6] (f) at (0+\movimiento,0) {5} edge [loop below,red] node {$a$} ();
	\node[draw,circle,scale=0.6] (g) at (-2.12+\movimiento,2.12) {6} edge [loop left,red] node {$a$} ();
	\node[draw,circle,scale=0.6] (h) at (-2.12+\movimiento,5.12) {7} edge [loop left,red] node {$a$} ();
	
	\begin{scope}[commutative diagrams/.cd, every arrow, every label]

	\draw[blue] (c) to node (aux) {$b$} (a);
	\draw[blue] (e) to node (aux) {$b$} (c);
	\draw[blue] (g) to node (aux) {$b$} (e);
	\draw[blue] (a) to node (aux) {$b$} (g);
	\draw[blue] (d) to node (aux) {$b$} (b);
	\draw[blue] (f) to node (aux) {$b$} (d);
	\draw[blue] (h) to node (aux) {$b$} (f);
	\draw[blue] (b) to node (aux) {$b$} (h);
	\end{scope}
	\end{tikzpicture}
	\caption{On the left $D\left(aP_{8}+bP_{8}^{7}\right)$,  and on the right $D\left(aI_{8}+bP_{8}^{6}\right)$.}\label{D806_0}
\end{figure}



\begin{figure}[h]
	\centering
	\begin{tikzpicture}[scale=0.6,commutative diagrams/every diagram]
	\node[draw,circle,scale=0.6] (a) at (1,5.67)  {0} ;
	\node[draw,circle,scale=0.6] (b) at (2.88,4.99) {1} ;
	\node[draw,circle,scale=0.6] (c) at (3.88,3.26) {2} ;
	\node[draw,circle,scale=0.6] (d) at (3.53,1.29) {3} ;
	\node[draw,circle,scale=0.6] (e) at (2,0) {4} ;
	\node[draw,circle,scale=0.6] (f) at (0,0) {5} ;
	\node[draw,circle,scale=0.6] (g) at (-1.53,1.29) {6} ;
	\node[draw,circle,scale=0.6] (h) at (-1.88,3.26) {7} ;
	\node[draw,circle,scale=0.6] (l) at (-0.88,4.99) {8} ;
	
	\begin{scope}[commutative diagrams/.cd, every arrow, every label]
	
	\draw[blue] (a) to node (aux) {} (f) ;
	\draw[blue] (b) to node (aux) {} (g);
	\draw[blue] (c) to node (aux) {} (h);
	\draw[blue] (d) to node (aux) {} (l);
	\draw[blue] (e) to node (aux) {} (a);
	\draw[blue] (f) to node (aux) {} (b);
	\draw[blue] (g) to node (aux) {} (c);
	\draw[blue] (h) to node (aux) {} (d);
	\draw[blue] (l) to node (aux) {} (e);
	\draw[red] (a) to node[swap] (aux) {} (c);
	\draw[red] (b) to node[swap] (aux) {} (d);
	\draw[red] (c) to node[swap] (aux) {} (e);
	\draw[red] (d) to node[swap] {} (f);
	\draw[red] (e) to node[swap] {} (g);
	\draw[red] (f) to node[swap] {} (h);
	\draw[red] (g) to node[swap] {} (l);
	\draw[red] (h) to node[swap] {} (a);
	\draw[red] (l) to node[swap] {} (b);
	\end{scope}
	
	
	\def\movimiento{10}
	\node[draw,circle,scale=0.6] (f) at (1+\movimiento,5.67)  {0} edge [loop above,red] node {$a$} ();
	\node[draw,circle,scale=0.6] (e) at (2.88+\movimiento,4.99) {1} edge [loop above,red] node {$a$} ();
	\node[draw,circle,scale=0.6] (d) at (3.88+\movimiento,3.26) {2} edge [loop right,red] node {$a$} ();
	\node[draw,circle,scale=0.6] (c) at (3.53+\movimiento,1.29) {3} edge [loop right,red] node {$a$} ();
	\node[draw,circle,scale=0.6] (b) at (2+\movimiento,0) {4} edge [loop below,red] node {$a$} ();
	\node[draw,circle,scale=0.6] (a) at (0+\movimiento,0) {5} edge [loop below,red] node {$a$} ();
	\node[draw,circle,scale=0.6] (l) at (-1.53+\movimiento,1.29) {6} edge [loop left,red] node {$a$} ();
	\node[draw,circle,scale=0.6] (h) at (-1.88+\movimiento,3.26) {7} edge [loop left,red] node {$a$} ();
	\node[draw,circle,scale=0.6] (g) at (-0.88+\movimiento,4.99) {8} edge [loop above,red] node {$a$} ();
	
	\begin{scope}[commutative diagrams/.cd, every arrow, every label]
	
	\draw[blue] (d) to node[swap] (aux) {$b$} (a);
	\draw[blue] (e) to node[swap] (aux) {$b$} (b);
	\draw[blue] (f) to node[swap] (aux) {$b$} (c);
	\draw[blue] (g) to node[swap] (aux) {$b$} (d);
	\draw[blue] (h) to node[swap] (aux) {$b$} (e);
	\draw[blue] (l) to node[swap] (aux) {$b$} (f);
	\draw[blue] (a) to node[swap] (aux) {$b$} (g);
	\draw[blue] (b) to node[swap] (aux) {$b$} (h);
	\draw[blue] (c) to node[swap] (aux) {$b$} (l);
	\end{scope}

\end{tikzpicture}
\caption{The digraph at left is $D\left(aP_{9}^{2}+bP_{9}^{5}\right)$ ( \(b\)--arcs in blue and  \(a\)--arcs in red),  and the digraph at right is $D\left(aI_{9}+bP_{9}^{3}\right)$.}\label{D806_1}
\end{figure}

Given \(n\) and \(s\) positive integers such that \(0<s<n\), the \((n,s)\)--canonical permutation is
\begin{equation}\label{nu}
\nu_{n,s}:=\prod_{i=1}^{(n,s)}\nu_{n,s,i},
\end{equation}
where
\[
\nu_{n,s,i} \! := \! ( i \, (n\backslash s) \! - \! 1 \;\;\; i \, (n\backslash s) \! - \! 2 \; \cdots \; i \, (n\backslash s) \! - \! ((n\backslash s) \! - \! 1) \;\;\; i \, (n\backslash s) \! - \! (n\backslash s) ).
\]
The permutation \(\nu_{n,s}\) has \((n,s)\) cycles of length \(n\backslash s\) in the natural-cyclic-order. For instance, if we consider \(n=8\) and \(s=6\), we have that
\(\nu_{8,6}=(3\;\;2\;\;1\;\;0)(7\;\;6\;\;5\;\;4)\). Notice that 
\begin{equation}
P_{\nu_{n,s}}=I_{(n,s)}\otimes P_{n\backslash s},
\end{equation}
where \(\otimes\) denote the usual Kronecker product between matrices, see \cite{steeb2011matrix}.

Permutations \(\tau_{n}^s\) and \(\nu_{n,s}\) have the same cycle type. Therefore, for each pair \(n,s\) with \(0<s<n\), there exists a permutation \(\sigma\) such that \(\sigma \tau_{n}^{s} \sigma^{-1} = \nu_{n,s}\). The digraph \(D(\nu_{8,6})\) can be seen in Figure \ref{D806_B}. 
Notice that \(D(\nu_{8,6})\) is an untangling version of \(D\left(aI_{8}+bP_{8}^{6}\right)\), up to loops. Untangling version of digraphs of form  \(D\left(aP_{n}^{s_{1}}+bP_{n}^{s_{2}}\right)\) are digraphs of form \(D\left(aI_{n}+bP_{\nu_{n,s_{2}-s_{1}}}\right)\), see Figure \ref{D806_3} and \ref{D806_4}.

Let \(n,s_{1}\) and \(s_{2}\) be integers such that \(0 < s_{1}<s_{2}<n\). If \(n\backslash s_{1}=n\backslash s_{2}\), then \(\nu_{n,s_{1}}=\nu_{n,s_{2}}\), thus, \(D\left(\nu_{n,s_{1}}\right) = D\left(\nu_{n,s_{2}}\right)\).

The permutations \(\tau_{n}^{s_{1}}\) and \(\sigma\) give us a block diagonal form of  \(aP_{n}^{s_{1}}+bP_{n}^{s_{2}}\).
\begin{align*}
P_{\sigma} P_{9}^{7} \left( aP_{9}^{2}+bP_{9}^{5} \right) P_{\sigma}^{T} &= aI_{9}+bP_{\nu_{9,3}} \\
&= \left[
\begin{array}{rrr|rrr|rrr}
	a & b & 0 & 0 & 0 & 0 & 0 & 0 & 0 \\
	0 & a & b & 0 & 0 & 0 & 0 & 0 & 0 \\
	b & 0 & a & 0 & 0 & 0 & 0 & 0 & 0 \\
	\hline
	0 & 0 & 0 & a & b & 0 & 0 & 0 & 0 \\
	0 & 0 & 0 & 0 & a & b & 0 & 0 & 0 \\
	0 & 0 & 0 & b & 0 & a & 0 & 0 & 0 \\
	\hline
	0 & 0 & 0 & 0 & 0 & 0 & a & b & 0 \\
	0 & 0 & 0 & 0 & 0 & 0 & 0 & a & b \\
	0 & 0 & 0 & 0 & 0 & 0 & b & 0 & a
\end{array}
\right] \\
&= I_{3} \otimes \left(aI_{3} + bP_{3} \right).
\end{align*}

%
\begin{figure}[h]
	\centering
	\begin{tikzpicture}[scale=0.5,commutative diagrams/every diagram]
	\node[white] at (13,3.62) {\textcolor{black}{
			\begin{tabular}{ccc}
			$i$ & $\modulo{\varrho_{8,6}(i)}{(8,6)}$ 	& $\ell_{8,6}(i)$   \\
			0   & 0						& 0					\\
			1	& 0						& 3					\\
			2	& 0						& 2					\\
			3	& 0						& 1					\\
			4	& 1						& 0					\\
			5	& 1						& 3					\\
			6	& 1						& 2					\\
			7	& 1						& 1					
			\end{tabular}
		}
	};
	\node[draw,circle,scale=0.6] (a) at (0,7.24)  {0};
	\node[draw,circle,scale=0.6] (b) at (3,7.24) {1};
	\node[draw,circle,scale=0.6] (c) at (5.12,5.12) {2};
	\node[draw,circle,scale=0.6] (d) at (5.12,2.12) {3};
	\node[draw,circle,scale=0.6] (e) at (3,0) {4};
	\node[draw,circle,scale=0.6] (f) at (0,0) {5};
	\node[draw,circle,scale=0.6] (g) at (-2.12,2.12) {6};
	\node[draw,circle,scale=0.6] (h) at (-2.12,5.12) {7};
	
	\begin{scope}[commutative diagrams/.cd, every arrow, every label]
	\draw[blue] (a) to node (aux) {} (b);
	\draw[blue] (b) to node (aux) {} (c);
	\draw[blue] (c) to node (aux) {} (d);
	\draw[blue] (d) to node (aux) {} (a);
	\draw[blue] (e) to node (aux) {} (f);
	\draw[blue] (f) to node (aux) {} (g);
	\draw[blue] (g) to node (aux) {} (h);
	\draw[blue] (h) to node (aux) {} (e);
	\end{scope}
	\end{tikzpicture}
	\caption{$D(\nu_{8,6})=D(\nu_{8,2})$. Note that $(8,6)=(8,2)=2$ and $8\backslash 6=8\backslash 2=4$.}\label{D806_B}
\end{figure}

\begin{figure}[h]
	\centering
	\begin{tikzpicture}[scale=0.5,commutative diagrams/every diagram]
	\node[draw,circle,scale=0.6] (a) at (0,7.24)  {0} ;
	\node[draw,circle,scale=0.6] (b) at (3,7.24) {1} ;
	\node[draw,circle,scale=0.6] (c) at (5.12,5.12) {2} ;
	\node[draw,circle,scale=0.6] (d) at (5.12,2.12) {3} ;
	\node[draw,circle,scale=0.6] (e) at (3,0) {4} ;
	\node[draw,circle,scale=0.6] (f) at (0,0) {5} ;
	\node[draw,circle,scale=0.6] (g) at (-2.12,2.12) {6} ;
	\node[draw,circle,scale=0.6] (h) at (-2.12,5.12) {7} ;
	
	\begin{scope}[commutative diagrams/.cd, every arrow, every label]
	
	\draw[blue] (a) to[bend right] node[swap] (aux) {$b$} (h);
	\draw[blue] (h) to[bend right] node[swap] (aux) {$b$} (g);
	\draw[blue] (g) to[bend right] node[swap] (aux) {$b$} (f);
	\draw[blue] (f) to[bend right] node[swap] (aux) {$b$} (e);
	\draw[blue] (e) to[bend right] node[swap] (aux) {$b$} (d);
	\draw[blue] (d) to[bend right] node[swap] (aux) {$b$} (c);
	\draw[blue] (c) to[bend right] node[swap] (aux) {$b$} (b);
	\draw[blue] (b) to[bend right] node[swap] (aux) {$b$} (a);
	\draw[red] (a) to[bend right] node[swap] (aux) {$a$} (b);
	\draw[red] (b) to[bend right] node[swap] (aux) {$a$} (c);
	\draw[red] (c) to[bend right] node[swap] (aux) {$a$} (d);
	\draw[red] (d) to[bend right] node[swap] {$a$} (e);
	\draw[red] (e) to[bend right] node[swap] {$a$} (f);
	\draw[red] (f) to[bend right] node[swap] {$a$} (g);
	\draw[red] (g) to[bend right] node[swap] {$a$} (h);
	\draw[red] (h) to[bend right] node[swap] {$a$} (a);
	
	\end{scope}
	
	
	\def\movimiento{11}
	\node[draw,circle,scale=0.6] (a) at (0+\movimiento,7.24)  {0}  edge [blue, loop above,red] node {} ();
	\node[draw,circle,scale=0.6] (b) at (3+\movimiento,7.24) {1} edge [blue,loop above,red] node {$a$} ();
	\node[draw,circle,scale=0.6] (c) at (5.12+\movimiento,5.12) {2}  edge [blue,loop right,red] node {$a$} ();
	\node[draw,circle,scale=0.6] (d) at (5.12+\movimiento,2.12) {3} edge [blue,loop right,red] node {$a$} ();
	\node[draw,circle,scale=0.6] (e) at (3+\movimiento,0) {4}  edge [loop below,red] node {$a$} ();
	\node[draw,circle,scale=0.6] (f) at (0+\movimiento,0) {5}  edge [loop below,red] node {$a$} ();
	\node[draw,circle,scale=0.6] (g) at (-2.12+\movimiento,2.12) {6}  edge [loop left,red] node {$a$} ();
	\node[draw,circle,scale=0.6] (h) at (-2.12+\movimiento,5.12) {7}  edge [loop left,red] node {$a$} ();
	
	\begin{scope}[commutative diagrams/.cd, every arrow, every label]
	\draw[blue] (a) to node (aux) {$b$} (b);
	\draw[blue] (b) to node (aux) {$b$} (c);
	\draw[blue] (c) to node (aux) {$b$} (d);
	\draw[blue] (d) to node (aux) {$b$} (a);
	\draw[blue] (e) to node (aux) {$b$} (f);
	\draw[blue] (f) to node (aux) {$b$} (g);
	\draw[blue] (g) to node (aux) {$b$} (h);
	\draw[blue] (h) to node (aux) {$b$} (e);
	\end{scope}
	\end{tikzpicture}
	\caption{On the left $D\left(aP_{8}+bP_{8}^{7}\right)$ and on the right its untangled version  $D\left(aI_{8}+bP_{\nu_{8,6}}\right)$.}\label{D806_3}
\end{figure}


\begin{figure}[h]
	\centering
	\begin{tikzpicture}[scale=0.6,commutative diagrams/every diagram]
	\node[draw,circle,scale=0.6] (a) at (1,5.67)  {0} ;
	\node[draw,circle,scale=0.6] (b) at (2.88,4.99) {1} ;
	\node[draw,circle,scale=0.6] (c) at (3.88,3.26) {2} ;
	\node[draw,circle,scale=0.6] (d) at (3.53,1.29) {3} ;
	\node[draw,circle,scale=0.6] (e) at (2,0) {4} ;
	\node[draw,circle,scale=0.6] (f) at (0,0) {5} ;
	\node[draw,circle,scale=0.6] (g) at (-1.53,1.29) {6} ;
	\node[draw,circle,scale=0.6] (h) at (-1.88,3.26) {7} ;
	\node[draw,circle,scale=0.6] (l) at (-0.88,4.99) {8} ;
	
	\begin{scope}[commutative diagrams/.cd, every arrow, every label]
	
	\draw[blue] (a) to node (aux) {} (f) ;
	\draw[blue] (b) to node (aux) {} (g);
	\draw[blue] (c) to node (aux) {} (h);
	\draw[blue] (d) to node (aux) {} (l);
	\draw[blue] (e) to node (aux) {} (a);
	\draw[blue] (f) to node (aux) {} (b);
	\draw[blue] (g) to node (aux) {} (c);
	\draw[blue] (h) to node (aux) {} (d);
	\draw[blue] (l) to node (aux) {} (e);
	\draw[red] (a) to node[swap] (aux) {} (c);
	\draw[red] (b) to node[swap] (aux) {} (d);
	\draw[red] (c) to node[swap] (aux) {} (e);
	\draw[red] (d) to node[swap] {} (f);
	\draw[red] (e) to node[swap] {} (g);
	\draw[red] (f) to node[swap] {} (h);
	\draw[red] (g) to node[swap] {} (l);
	\draw[red] (h) to node[swap] {} (a);
	\draw[red] (l) to node[swap] {} (b);
	\end{scope}
	
	
    \def\movimiento{10}
    \node[draw,circle,scale=0.6] (f) at (1+\movimiento,5.67)  {0} edge [loop above,red] node {$a$} ();
    \node[draw,circle,scale=0.6] (e) at (2.88+\movimiento,4.99) {1} edge [loop above,red] node {$a$} ();
    \node[draw,circle,scale=0.6] (d) at (3.88+\movimiento,3.26) {2} edge [loop right,red] node {$a$} ();
    
    \node[draw,circle,scale=0.6] (c) at (3.53+\movimiento,1.29) {3} edge [black,loop right,red] node {$a$} ();
    \node[draw,circle,scale=0.6] (b) at (2+\movimiento,0) {4} edge [black,loop below,red] node {$a$} ();
    \node[draw,circle,scale=0.6] (a) at (0+\movimiento,0) {5} edge [black,loop below,red] node {$a$} ();
    \node[draw,circle,scale=0.6] (l) at (-1.53+\movimiento,1.29) {6} edge [loop left,red] node {$a$} ();
    \node[draw,circle,scale=0.6] (h) at (-1.88+\movimiento,3.26) {7} edge [loop left,red] node {$a$} ();
    \node[draw,circle,scale=0.6] (g) at (-0.88+\movimiento,4.99) {8} edge [loop above,red] node {$a$} ();
    
    \begin{scope}[commutative diagrams/.cd, every arrow, every label]
    \draw[blue] (e) to node (aux) {$b$} (d);
    \draw[blue] (f) to node (aux) {$b$} (e);
    \draw[blue] (d) to node (aux) {$b$} (f);
    
    \draw[blue] (b) to node (aux) {$b$} (a);
    \draw[blue] (a) to node (aux) {$b$} (c);
    \draw[blue] (c) to node (aux) {$b$} (b);
    
    \draw[blue] (h) to node (aux) {$b$} (g);
    \draw[blue] (l) to node (aux) {$b$} (h);
    \draw[blue] (g) to node (aux) {$b$} (l);
    \end{scope}
	
	\end{tikzpicture}
	\caption{The Digraph at left is $D\left(aP_{9}^{2}+bP_{9}^{5}\right)$ (in blue \(b\)--arcs and in red \(a\)--arcs),  and the digraph at right is its untangled version  $D\left(aI_{9}+bP_{\nu_{9,3}}\right)$.}\label{D806_4}
\end{figure}

%
%
\section{Finding \(\sigma_{n,s}\)}\label{Sec_Sigma}

In order to compute the Drazin (group) inverse of a matrix of the form \(aP_{n}^{s_{1}}+bP_{n}^{s_{2}}\) (among others computations) we need to find one \(\sigma\) explicitly. This can be done if we find which vertices are together in the same connected component of \(D(aI_{n}+bP_{n}^{s})\). Since this digraph appears many times in our work, we just write \(D_{n,s}(a,b)\) instead of \(D\left(aI_{n}+bP_{n}^{s}\right)\).

Let $n$ and $s$ be non-negative integers such that $0< s <n$. For each \(i \in \mathbb{Z}\), we define 
\begin{equation}
R(n,s,i):= \{\modulo{i+k\,s}{n} : k \in \mathbb{Z}\}.
\end{equation}
This is the set of all reachable vertices of from \(i\) in \(D_{n,s}(a,b)\). Notice that the following statements are all equivalent.
\begin{enumerate}
	\item \(R(n,s,i_{1}) =R(n,s,i_{2}) \),
	\item \(\modulo{i_2}{n} \in R(n,s,i_{1})\),
	\item \(R(n,s,i_{1}) \cap R(n,s,i_{2})\neq \emptyset\), and
	\item \(i_{1}-i_{2}=\modulo{0}{(n,s)}\).
\end{enumerate}
In Figure \ref{D806}, we can see \(R(8,6,0)\) and \(R(8,6,1)\).
%
\begin{figure}[h]
	\centering
	\begin{tikzpicture}[scale=0.5,commutative diagrams/every diagram]
	\node[white] at (13,3.62) {\textcolor{black}{$
			\begin{matrix}
			\textcolor{blue}{R(8,6,0)}&\textcolor{blue}{ =} & \textcolor{blue}{	\{0,6,4,2\} }\\
			{} & {} & {} \\
			R(8,6,1) & = & \{1,7,5,3\} \\
			\end{matrix}$
	}};
	\node[draw,blue,circle,scale=0.6] (a) at (0,7.24)  {0} edge [blue,loop above] node {$a$} ();
	\node[draw,circle,scale=0.6] (b) at (3,7.24) {1} edge [loop above] node {$a$} ();
	\node[draw,blue,circle,scale=0.6] (c) at (5.12,5.12) {2} edge [blue,loop right] node {$a$} ();
	\node[draw,circle,scale=0.6] (d) at (5.12,2.12) {3} edge [loop right] node {$a$} ();
	\node[draw,blue,circle,scale=0.6] (e) at (3,0) {4} edge [blue,loop below] node {$a$} ();
	\node[draw,circle,scale=0.6] (f) at (0,0) {5} edge [loop below] node {$a$} ();
	\node[draw,blue,circle,scale=0.6] (g) at (-2.12,2.12) {6} edge [blue,loop left] node {$a$} ();
	\node[draw,circle,scale=0.6] (h) at (-2.12,5.12) {7} edge [loop left] node {$a$} ();
	
	\begin{scope}[commutative diagrams/.cd, every arrow, every label]
	\draw[blue] (c) to node (aux) {$b$} (a);
	\draw[blue] (e) to node (aux) {$b$} (c);
	\draw[blue] (g) to node (aux) {$b$} (e);
	\draw[blue] (a) to node (aux) {$b$} (g);
	\draw (d) to node (aux) {$b$} (b);
	\draw (f) to node (aux) {$b$} (d);
	\draw (h) to node (aux) {$b$} (f);
	\draw (b) to node (aux) {$b$} (h);
	\end{scope}
	\end{tikzpicture}
	\caption{$D_{8,6}(a,b)$. Note that $(8,6)=2$ and $8\backslash 6=4$.}\label{D806}
\end{figure}

%
\begin{figure}[h]
	\centering
	\begin{tikzpicture}[scale=0.7,commutative diagrams/every diagram]
	\node[white] at (10,3.62) {\textcolor{black}{
			\begin{tabular}{ccc}
			$i$ & $\cycle_{8,6}(i)$ 	& $\pos_{8,6}(i)$\\
			\textcolor{blue}{0}  & \textcolor{blue}{0}					& \textcolor{blue}{0}					\\
			\textcolor{red}{1}	& \textcolor{red}{1}						& \textcolor{red}{0}					\\
			2	& 2						& 0 				\\
			\textcolor{blue}{3}	&\textcolor{blue}{0}					& \textcolor{blue}{1}					\\
			\textcolor{red}{4}	& \textcolor{red}{1}						& \textcolor{red}{1}					\\
			5	& 2						& 1					\\
			\textcolor{blue}{6}	& \textcolor{blue}{0}						& \textcolor{blue}{2}				\\
			\textcolor{red}{7}	& \textcolor{red}{1}						& \textcolor{red}{2}				\\
			8	& 2						& 2					
			\end{tabular}
		}
	};
	\node[draw,blue,circle,scale=0.6] (f) at (1,5.67)  {0} edge [blue,loop above] node {$a$} ();
	\node[draw,red,circle,scale=0.6] (e) at (2.88,4.99) {1} edge [red,loop above] node {$a$} ();
	\node[draw,circle,scale=0.6] (d) at (3.88,3.26) {2} edge [loop right] node {$a$} ();
	\node[draw,blue,circle,scale=0.6] (c) at (3.53,1.29) {3} edge [blue,loop right] node {$a$} ();
	\node[draw,red,circle,scale=0.6] (b) at (2,0) {4} edge [red,loop below] node {$a$} ();
	\node[draw,circle,scale=0.6] (a) at (0,0) {5} edge [loop below] node {$a$} ();
	\node[draw,blue,circle,scale=0.6] (l) at (-1.53,1.29) {6} edge [blue,loop left] node {$a$} ();
	\node[draw,red,circle,scale=0.6] (h) at (-1.88,3.26) {7} edge [red,loop left] node {$a$} ();
	\node[draw,circle,scale=0.6] (g) at (-0.88,4.99) {8} edge [loop above] node {$a$} ();
	
	\begin{scope}[commutative diagrams/.cd, every arrow, every label]
	\draw[blue] (f) to node[swap] (aux) {$b$} (c);
	\draw[blue] (c) to node[swap] (aux) {$b$} (l);
	\draw[blue] (l) to node[swap] (aux) {$b$} (f);
	\draw[red] (e) to node[swap] (aux) {$b$} (b);
	\draw[red] (b) to node[swap] (aux) {$b$} (h);
	\draw[red] (h) to node[swap] (aux) {$b$} (e);
	\draw (d) to node[swap] (aux) {$b$} (a);
	\draw (a) to node[swap] (aux) {$b$} (g);
	\draw (g) to node[swap] (aux) {$b$} (d);
	\end{scope}
	\end{tikzpicture}
	\caption{$D_{9,3}(a,b)$. Note that $(9,3)=9\backslash 3=3$.}\label{D903}
\end{figure}
In order to obtain  explicitly a \(\sigma\), we introduce the following functions:
\begin{enumerate}
	\item \(\varrho_{n,s}: [n] \longrightarrow [(n,s)]\),
	\[\varrho_{n,s}(i):=\max \{ k \in [(n,s)] : i\geq k\, (n\backslash s) \}.\]
	\item \(\ell_{n,s} : [n] \longrightarrow [n\backslash s]\),
	\[\ell_{n,s}(i):=\modulo{i -  \varrho_{n,s}(i) \, (n\backslash s)}{n\backslash s}.\]
	\item \(\cycle_{n,s}: [n] \longrightarrow [(n,s)]\), if \(i\in R(n,s,h)\), then
	\[\cycle_{n,s}(i):= h.\]
	\item \(\pos_{n,s}: [n] \longrightarrow [n\backslash s]\), if \(i\in R(n,s,h)\) and \(i=t\,s+h\), then
	\[\pos_{n,s}(i)= t.\]
\end{enumerate}

The functions \(\varrho_{n,s}\), and \(\ell_{n,s}\) give us, essentially,  the cycle and the position on the cycle of a vertex of \(D(\nu_{n,s})\), respectively. While the functions \(\cycle_{n,s}\) and \(\pos_{n,s}\) give us the main cycle and the position on the main cycle in \(D_{n,s}(a,b)\), respectively.

We embed these digraphs in the cylinder \([(n,s)] \times [n\backslash s]\). The following functions and their pullbacks are these embeddings.
\begin{enumerate}
	\setcounter{enumi}{4}
	\item \(J_{n,s}: [n] \longrightarrow [(n,s)] \times [n \backslash s]\)
	\[J_{n,s}(i)=(\varrho_{n,s}(i),\ell_{n,s}(i)).\]
	\item\(\widehat{J}_{n,s}: [(n,s)] \times [n \backslash s] \longrightarrow [n]\)
	\[\widehat{J}_{n,s}(x,y)= y+ x \, (n\backslash s).\]
	\item \(F_{n,s}: [n] \longrightarrow  [(n,s)] \times [n \backslash s]\)
	\[F_{n,s}(i)=(\cycle_{n,s}(i),\pos_{n,s}(i)).\]
	\item \(\widehat{F}_{n,s}: [(n,s)] \times [n \backslash s] \longrightarrow [n]\)
	\[\hat{F}_{n,s}(x,y)= \modulo{s\,y + x}{n}.\]
\end{enumerate}
The function \(J_{n,s}\) embeds \(D(\nu_{n,s})\) in \( [(n,s)] \times [n \backslash s]\) ``in the same way'' that \(F_{n,s}\) embeds  \(D_{n,s}(a,b)\) in \( [(n,s)] \times [n \backslash s]\).

Let \(A\) be a set, we denote the identity map by \(id_{A}:A \rightarrow A\). If \(n\) and \(s\) are two integers such that \(0<s<n\), then the following identities are direct
\begin{align*}
\widehat{J}_{n,s} \circ J_{n,s} &=id_{[n]},\\
J_{n,s} \circ \widehat{J}_{n,s} &= id_{[(n,s)] \times [n \backslash s]},\\
\widehat{F}_{n,s} \circ F_{n,s} & =id_{[n]},\\
F_{n,s} \circ \widehat{F}_{n,s} & = id_{[(n,s)] \times [n \backslash s]}.
\end{align*}
These expressions give us the next lemma.
\begin{lemma}
	Let \(n\) and \(s\) be integers such that \(0<s<n\). Then, the function \(\sigma_{n,s}: [n] \longrightarrow [n]\), defined by
	\(\sigma_{n,s}:= \widehat{F}_{n,s} \circ J_{n,s}\), is a permutation of \([n]\) and  \(\sigma_{n,s}^{-1} = \widehat{J}_{n,s} \circ F_{n,s}\).
\end{lemma}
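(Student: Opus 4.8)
The plan is to read the statement straight off the four displayed identities that precede the lemma, since those already contain all the content. First I would abbreviate the cylinder as $C := [(n,s)] \times \mathbb{Z}_{n\backslash s}$ and name the two composite maps appearing in $\sigma_{n,s}$. Set
\[
A := J_{n,s}\circ\iota_{n} : [n] \longrightarrow C,
\qquad
B := \iota_{n}^{-1}\circ\widehat{F}_{n,s} : C \longrightarrow [n],
\]
so that by the very definition of $\sigma_{n,s}$ we have $\sigma_{n,s} = B\circ A$.

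Next I would observe that the four identities say exactly that $A$ and $B$ are mutually invertible with explicit inverses. The first two identities read $\left(\iota_{n}^{-1}\circ\widehat{J}_{n,s}\right)\circ A = id_{[n]}$ and $A\circ\left(\iota_{n}^{-1}\circ\widehat{J}_{n,s}\right) = id_{C}$, so $A$ is a bijection with $A^{-1} = \iota_{n}^{-1}\circ\widehat{J}_{n,s}$. Symmetrically, the last two identities read $B\circ\left(F_{n,s}\circ\iota_{n}\right) = id_{[n]}$ and $\left(F_{n,s}\circ\iota_{n}\right)\circ B = id_{C}$, so $B$ is a bijection with $B^{-1} = F_{n,s}\circ\iota_{n}$.

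From here the conclusion is immediate. A composition of bijections is a bijection, so $\sigma_{n,s} = B\circ A$ is a bijection of $[n]$, that is, a permutation. Applying the rule $(B\circ A)^{-1} = A^{-1}\circ B^{-1}$ and substituting the two inverses just found yields
\[
\sigma_{n,s}^{-1} = \left(\iota_{n}^{-1}\circ\widehat{J}_{n,s}\right)\circ\left(F_{n,s}\circ\iota_{n}\right) = \iota_{n}^{-1}\circ\widehat{J}_{n,s}\circ F_{n,s}\circ\iota_{n},
\]
which is precisely the claimed formula.

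I expect no genuine obstacle at the level of the lemma itself: the entire burden has been front-loaded into the four compositional identities, which the paper asserts to be direct. The one point requiring care is the bookkeeping of domains and codomains, namely keeping straight that $A$ and $B$ run in opposite directions between $[n]$ and the cylinder $C$, so that inverting their composite reverses their order and lands the two remaining maps in the stated arrangement. Should a fully self-contained argument be wanted, I would instead unfold the definitions of $\varrho_{n,s}$, $\ell_{n,s}$, $\cycle_{n,s}$ and $\pos_{n,s}$ and verify the four identities pointwise on a representative $i \in [n]$ and a pair $(c,p)\in C$; but that verification is exactly the routine computation the preceding paragraph already performs, so I would simply invoke it.
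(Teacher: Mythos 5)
Your proposal is correct and is essentially the paper's own argument: the paper gives no separate proof, stating only that the four compositional identities ``give us'' the lemma, and your packaging of those identities as the statements that \(A=J_{n,s}\circ\iota_{n}\) and \(B=\iota_{n}^{-1}\circ\widehat{F}_{n,s}\) are mutually inverse-equipped bijections, followed by \((B\circ A)^{-1}=A^{-1}\circ B^{-1}\), is exactly the intended reading. Nothing is missing.
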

  
The function \(\Shift_{n,s}: [(n,s)] \times [n \backslash s] \longrightarrow [(n,s)] \times [n \backslash s]\), defined by
\[
\Shift_{n,s}(x,y):=(x,\,\modulo{y-1}{n\backslash s}),
\]
allows us to express \(\tau_{n}^{s}\) in terms of \(F_{n,s}\) and \(\widehat{F}_{n,s}\), and \(\nu_{n,s}\) in terms of \(J_{n,s}\) and \(\widehat{J}_{n,s}\).

\begin{lemma}
Let \(n\) and \(s\) be integers such that \(0<s<n\). Then
\begin{align*}
\tau_{n}^{s}&=  \widehat{F}_{n,s} \circ \Shift_{n,s} \circ F_{n,s},\\
\nu_{n,s} &= \widehat{J}_{n,s} \circ \Shift_{n,s} \circ J_{n,s}.
\end{align*}
\end{lemma}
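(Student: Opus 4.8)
The plan is to prove both identities by evaluating the right-hand composites at an arbitrary point of \([n]\) and matching the result against the definitions of \(\tau_n^s\) and of \(\nu_{n,s}\), respectively. Since \(\iota_n\) is only the relabelling \([n]\leftrightarrow\mathbb{Z}_n\), I would first strip it off and reduce each claim to an equality of self-maps of \(\mathbb{Z}_n\). By the four composition identities established just above, \(F_{n,s}\) and \(\widehat{F}_{n,s}\) (resp. \(J_{n,s}\) and \(\widehat{J}_{n,s}\)) are already known to be mutually inverse coordinate maps between \([n]\) and the cylinder \([(n,s)]\times\mathbb{Z}_{n\backslash s}\); hence each composite is automatically a permutation, and it only remains to identify it.

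For the first identity I would fix \(i\in[n]\) and set \(c=\cycle_{n,s}(i)\) and \(x=\pos_{n,s}(i)\), so that \(0\le x<n\backslash s\) and, by the defining relation of \(\pos_{n,s}\), \(i\equiv c+sx\pmod n\). Then \(F_{n,s}(i)=(c,x)\), the map \(\Shift_{n,s}\) sends this to \((c,\modulo{x-1}{n\backslash s})\), and \(\widehat{F}_{n,s}\) returns \(\modulo{c+s\,(\modulo{x-1}{n\backslash s})}{n}\). The one observation that does the work is that \(s\cdot(n\backslash s)=[n,s]\) is a multiple of \(n\), so reducing the position modulo \(n\backslash s\) before multiplying by \(s\) only changes the value by a multiple of \(n\); hence \(s\,(\modulo{x-1}{n\backslash s})\equiv s(x-1)\pmod n\), and the output is \(\modulo{c+s(x-1)}{n}=\modulo{i-s}{n}=\tau_n^s(i)\). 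This half is essentially automatic.

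The second identity is where the real content lies, and I would treat it as the main obstacle. Fix \(i\in[n]\); because \(0\le i<n\) we have \(\varrho_{n,s}(i)=\lfloor i/(n\backslash s)\rfloor\in[(n,s)]\), so the residue in \(J_{n,s}\) is harmless and \(J_{n,s}(i)=(c,p)\) with \(c=\varrho_{n,s}(i)\) and \(p=\ell_{n,s}(i)\); moreover \(\widehat{J}_{n,s}(c,p)=i\) by the inverse relation. Applying \(\Shift_{n,s}\) gives \((c,p')\) with \(p'=\modulo{p-1}{n\backslash s}\), and then \(\widehat{J}_{n,s}(c,p')=(c+1-\delta_{0,p'})(n\backslash s)-p'\). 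I would compare this with \(\nu_{n,s}(i)\) read off directly from the cycle \(\nu_{n,s,\,c+1}=\bigl((c+1)(n\backslash s)-1\ \cdots\ c(n\backslash s)\bigr)\), which permutes the block \(\{c(n\backslash s),\dots,(c+1)(n\backslash s)-1\}\) and fixes every other block. The verification splits into two cases: the generic case \(p\neq 0\), where no delta term fires and the map is a clean decrement inside the block, and the boundary case \(p=0\), where the \(\delta_{0,p'}\) correction in \(\widehat{J}_{n,s}\) supplies exactly the jump that closes the cycle at the top of the block.

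The delicate point throughout this case analysis is orientation: one must keep straight the direction in which \(\ell_{n,s}\) numbers the vertices of a cycle of \(D(\nu_{n,s})\) against the direction in which \(\Shift_{n,s}\) moves the position coordinate, and against the order in which the cycle \(\nu_{n,s,i}\) is written. Pinning down this orientation, and with it the correct handling of the \(p=0\) wrap through \(\delta_{0,p}\), is the whole game; I would expect that a careful such check is exactly what decides whether the right-hand composite realizes \(\nu_{n,s}\) or its inverse. Once the two embeddings are matched consistently, both displayed equalities follow.
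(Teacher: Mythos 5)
Your treatment of the first identity is correct and complete: the single observation that \(s\cdot(n\backslash s)=[n,s]\equiv 0 \pmod n\) is exactly what makes \(\widehat{F}_{n,s}\circ\Shift_{n,s}\circ F_{n,s}\) come out as \(i\mapsto \modulo{(i-s)}{n}=\tau_n^s(i)\). The gap is in the second identity, and it is precisely at the point where you stop: you reduce everything to the orientation check, concede that this check ``decides whether the right-hand composite realizes \(\nu_{n,s}\) or its inverse'', and then assert the conclusion without performing it. Performing it refutes the equality as literally stated. Write \(m=n\backslash s\) and \(i=cm+r\) with \(0\le r<m\); then \(\ell_{n,s}(i)=\modulo{(m-r)}{m}\), so \(\ell_{n,s}\) numbers each block \emph{downward} from the top, and decrementing this coordinate via \(\Shift_{n,s}\) moves \(i\) \emph{upward}: for \(0\le r<m-1\) the composite sends \(i\mapsto cm+r+1=i+1\) (not \(i-1\)), and for \(r=m-1\) one has \(p=1\), \(p'=0\), and the \(\delta_{0,p'}\) term sends the top of the block to the bottom. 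By the paper's cycle-notation convention, \(\nu_{n,s}\) restricted to a block is the decrement-with-wrap map (consistent with \(P_{\nu_{n,s}}=I_{(n,s)}\otimes P_{n\backslash s}\)), so the right-hand composite equals \(\nu_{n,s}^{-1}\), not \(\nu_{n,s}\), whenever \(n\backslash s\ge 3\). Concretely, for \(n=8\), \(s=6\): \(J_{8,6}(3)=(0,1)\), \(\Shift_{8,6}(0,1)=(0,0)\), \(\widehat{J}_{8,6}(0,0)=0\), whereas \(\nu_{8,6}(3)=2\) and \(\nu_{8,6}^{-1}(3)=0\).

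Two further remarks. First, your description of the boundary case is wrong in detail: at \(p=0\) one gets \(p'=m-1\neq 0\), so no delta fires and the bottom element \(cm\) is sent to \(cm+1\) (while \(\nu_{n,s}(cm)=(c+1)m-1\)); the delta fires at \(p=1\), closing the wrap at the \emph{bottom} of the block — the opposite of what \(\nu_{n,s}\) does there. Second, the paper states this lemma with no proof at all (it is presented as immediate from the four inversion identities), so there is no argument to compare against; your instinct that the relative orientation of \(\ell_{n,s}\) and \(\Shift_{n,s}\) is ``the whole game'' was exactly right, but once played the game shows the second display holds only when \(n\backslash s\le 2\) (where \(\nu_{n,s}=\nu_{n,s}^{-1}\)). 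It becomes correct in general if one replaces \(\Shift_{n,s}\) by its inverse \((c,p)\mapsto(c,\modulo{(p+1)}{n\backslash s})\) on the \(J\)-side, or equivalently reads \(\nu_{n,s}^{-1}\) on the left, or reverses the orientation of \(\ell_{n,s}\) together with \(\widehat{J}_{n,s}\) — a repair that then propagates to Corollary \ref{coro_J1}, which depends on this lemma.
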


\begin{corollary} \label{coro_J1}
Let \(n\) and \(s\) be integers such that \(0<s<n\). Then
\begin{align*}
\nu_{n,s} & =\sigma_{n,s}^{-1} \circ \tau_{n,s} \circ \sigma_{n,s},\\
I_{(n,s)} \otimes P_{n \backslash s} & =P_{\sigma_{n,s}}^{T}P_{n}^{s}P_{\sigma_{n,s}}.
\end{align*}
\end{corollary}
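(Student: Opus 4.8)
The plan is to establish the two identities in turn: the first by direct substitution into the conjugation and cancellation of compositions, and the second by transporting the first through the representation \(\alpha\mapsto P_{\alpha}\).

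For the first identity I would substitute the closed forms supplied by the two preceding lemmas. Writing \(\sigma_{n,s}^{-1}=\iota_{n}^{-1}\circ\widehat{J}_{n,s}\circ F_{n,s}\circ\iota_{n}\), the formula \(\tau_{n}^{s}=\iota_{n}^{-1}\circ\widehat{F}_{n,s}\circ\Shift_{n,s}\circ F_{n,s}\circ\iota_{n}\), and \(\sigma_{n,s}=\iota_{n}^{-1}\circ\widehat{F}_{n,s}\circ J_{n,s}\circ\iota_{n}\), the composite \(\sigma_{n,s}^{-1}\circ\tau_{n}^{s}\circ\sigma_{n,s}\) becomes a chain of eight maps in which each interior factor \(\iota_{n}\circ\iota_{n}^{-1}\) collapses to the identity, leaving
\[
\iota_{n}^{-1}\circ\widehat{J}_{n,s}\circ\bigl(F_{n,s}\circ\widehat{F}_{n,s}\bigr)\circ\Shift_{n,s}\circ\bigl(F_{n,s}\circ\widehat{F}_{n,s}\bigr)\circ J_{n,s}\circ\iota_{n}.
\]
The key step is then the fourth of the four displayed identities that precede the first lemma, which after cancelling \(\iota_{n}\circ\iota_{n}^{-1}\) reads exactly \(F_{n,s}\circ\widehat{F}_{n,s}=id\) on \([(n,s)]\times\mathbb{Z}_{n\backslash s}\). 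Applying it to both parenthesised factors collapses the chain to \(\iota_{n}^{-1}\circ\widehat{J}_{n,s}\circ\Shift_{n,s}\circ J_{n,s}\circ\iota_{n}\), which the second lemma identifies with \(\nu_{n,s}\). This proves \(\nu_{n,s}=\sigma_{n,s}^{-1}\circ\tau_{n}^{s}\circ\sigma_{n,s}\).

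For the second identity I would apply the assignment \(\alpha\mapsto P_{\alpha}\) to the first identity. Since this assignment is a representation of \(\mathbb{S}_{n}\), so that \(P_{\alpha\beta}=P_{\alpha}P_{\beta}\), and since \(P_{\sigma^{-1}}=P_{\sigma}^{-1}=P_{\sigma}^{T}\), I obtain \(P_{\nu_{n,s}}=P_{\sigma_{n,s}}^{T}\,P_{\tau_{n}^{s}}\,P_{\sigma_{n,s}}\). I would then replace \(P_{\tau_{n}^{s}}\) by \(P_{n}^{s}\), using \(P_{n}=P_{\tau_{n}}\) and \(P_{n}^{k}=P_{\tau_{n}^{k}}\) as recorded in the introduction, and replace the left-hand side by \(I_{(n,s)}\otimes P_{n\backslash s}\), using the already established identity \(P_{\nu_{n,s}}=I_{(n,s)}\otimes P_{n\backslash s}\). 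This yields \(I_{(n,s)}\otimes P_{n\backslash s}=P_{\sigma_{n,s}}^{T}P_{n}^{s}P_{\sigma_{n,s}}\).

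I do not expect a genuine obstacle: the argument is purely formal bookkeeping on the compositions of the eight maps together with the homomorphism property of \(\alpha\mapsto P_{\alpha}\). The only point demanding care is keeping the order of composition and the direction of each cancellation identity straight, so that the cancellable pairs \(F_{n,s}\circ\widehat{F}_{n,s}\) (rather than \(\widehat{F}_{n,s}\circ F_{n,s}\)) are the ones that actually appear in the chain.
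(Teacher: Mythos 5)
Your proof is correct and is exactly the derivation the paper intends: the paper states Corollary \ref{coro_J1} without proof precisely because it follows, as you show, by composing the closed forms of \(\sigma_{n,s}\), \(\sigma_{n,s}^{-1}\) and \(\tau_{n}^{s}\) from the two preceding lemmas, cancelling via the fourth displayed identity \(\left(F_{n,s}\circ\iota_{n}\right)\circ\left(\iota_{n}^{-1}\circ\widehat{F}_{n,s}\right)=id_{[(n,s)]\times\mathbb{Z}_{n\backslash s}}\), and then pushing the resulting conjugation through the representation \(\alpha\mapsto P_{\alpha}\) with \(P_{\sigma^{-1}}=P_{\sigma}^{T}\) and \(P_{\nu_{n,s}}=I_{(n,s)}\otimes P_{n\backslash s}\). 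No gaps; your care about which ordered pair cancels is the only subtle point, and you handled it correctly.
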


\begin{theorem} \label{Untagling_theo}
	Let \(n, s_{1},s_{2}\) be integers such that \(0\leq s_{1}<s_{2} <n\), and let \(a,b\) non-zero complex numbers. Then, 
	\begin{equation*}
	aP_{n}^{s_{1}}+bP_{n}^{s_{2}}= P_{n}^{s_{1}} P_{\sigma_{n,s_{2}-s_{1}}}
	\left[
	I_{(n,s_{2}-s_{1})} \otimes \left(aI_{n \backslash \left(s_{2}-s_{1}\right) } +bP_{n \backslash \left(s_{2}-s_{1}\right) }\right)
	\right]
	P_{\sigma_{n,s_{2}-s_{1}}}^{T}.
	\end{equation*}
\end{theorem}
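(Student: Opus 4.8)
The plan is to reduce everything to the conjugation identity already established in Corollary~\ref{coro_J1}. Set $s := s_2 - s_1$; since $0 \le s_1 < s_2 < n$ we have $0 < s < n$, so $\sigma_{n,s}$ and Corollary~\ref{coro_J1} both apply. First I would recall the elementary factorization noted before the statement, namely $aP_n^{s_1} + bP_n^{s_2} = P_n^{s_1}\left(aI_n + bP_n^{s}\right)$, which follows from $P_n^{s_1}P_n^{s} = P_n^{s_1+s} = P_n^{s_2}$. This isolates the outer factor $P_n^{s_1}$ in the claimed formula, so it suffices to prove the Kronecker identity
\[
aI_n + bP_n^{s} = P_{\sigma_{n,s}}\left[I_{(n,s)} \otimes \left(aI_{n\backslash s} + bP_{n\backslash s}\right)\right]P_{\sigma_{n,s}}^T.
\]

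Next I would expand the bracketed matrix by bilinearity of the Kronecker product,
\[
I_{(n,s)} \otimes \left(aI_{n\backslash s} + bP_{n\backslash s}\right) = a\left(I_{(n,s)} \otimes I_{n\backslash s}\right) + b\left(I_{(n,s)} \otimes P_{n\backslash s}\right).
\]
Here the key bookkeeping observation is that $I_{(n,s)} \otimes I_{n\backslash s} = I_n$, because $(n,s)\cdot(n\backslash s) = (n,s)\cdot\tfrac{n}{(n,s)} = n$. Thus the bracketed matrix equals $aI_n + b\left(I_{(n,s)} \otimes P_{n\backslash s}\right)$.

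Now I would conjugate by $P_{\sigma_{n,s}}$. Using that permutation matrices are orthogonal, so that $P_{\sigma_{n,s}}P_{\sigma_{n,s}}^T = I_n$, the $aI_n$ summand is fixed, while the $b$-summand transforms according to Corollary~\ref{coro_J1}: starting from $I_{(n,s)} \otimes P_{n\backslash s} = P_{\sigma_{n,s}}^T P_n^s P_{\sigma_{n,s}}$ and multiplying on the left by $P_{\sigma_{n,s}}$ and on the right by $P_{\sigma_{n,s}}^T$ gives $P_{\sigma_{n,s}}\left(I_{(n,s)} \otimes P_{n\backslash s}\right)P_{\sigma_{n,s}}^T = P_n^s$. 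Combining these yields exactly $aI_n + bP_n^s$, which establishes the displayed Kronecker identity. Multiplying both sides on the left by $P_n^{s_1}$ and invoking the factorization from the first step then gives $aP_n^{s_1} + bP_n^{s_2}$, completing the proof.

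As for difficulty, essentially all of the genuine content has been front-loaded into the construction of $\sigma_{n,s}$ and the proof of Corollary~\ref{coro_J1}. Granting that corollary, the theorem is a routine manipulation and I expect no real obstacle; the only points requiring care are the index arithmetic $(n,s)\cdot(n\backslash s)=n$, which guarantees the two identity blocks assemble into $I_n$, and keeping the transpose of $P_{\sigma_{n,s}}$ on the correct side throughout the conjugation.
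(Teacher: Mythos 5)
Your proposal is correct and follows essentially the same route as the paper: the paper's proof also reduces to Corollary~\ref{coro_J1} by conjugating $P_n^{n-s_1}\left(aP_n^{s_1}+bP_n^{s_2}\right)=aI_n+bP_n^{s}$ with $P_{\sigma_{n,s}}$ and recombining via bilinearity of the Kronecker product, which is just your argument read in the opposite direction. Your added bookkeeping (the factorization $P_n^{s_1}P_n^{s}=P_n^{s_2}$ and the identity $I_{(n,s)}\otimes I_{n\backslash s}=I_n$) is exactly what the paper leaves implicit.
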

\begin{proof} 
Let \(s=s_{2}-s_{1}\). Then, by Corollary \ref{coro_J1},
\begin{eqnarray*}
	P_{\sigma_{n,s}}^{T}
	P_{n}^{n-s_{1}} \left(
	aP_{n}^{s_{1}} +bP_{n}^{s_{2}} \right) 
	P_{\sigma_{n,s}} & =&
	a I_{n}+b 	P_{\sigma_{n,s}}^{T} P_{n}^{s}	P_{\sigma_{n,s}}\\
	{ } & = &  a I_{n}+b\left( I_{(n,s)} \otimes P_{n \backslash s}\right)\\
	{} & = & 	I_{(n,s)} \otimes \left(aI_{n \backslash s } +bP_{n \backslash s }\right).
\end{eqnarray*}
This conclude the proof. 
\end{proof}

%

The following theorem showed that we can untangle $\sum_{k=0}^{n\backslash s-1}a_{k}P_{n}^{ks}$, in the same way as in the case of $P_{n}^{s}$.

\begin{theorem}
Let $s,n$ positive integers such that $s<n$ and let $a_{k}$ be non-zero complex numbers for $k\in [n\backslash s]$. Then there exists a permutation $\sigma_{n,s}$ of \([n]\) such that
$$P_{\sigma_{n,s}}^{-1}\left( \sum_{k=0}^{(n\backslash s)-1} a_{k}P_{n}^{s\,k}\right) P_{\sigma_{n,s}}=I_{(n,s)} \otimes \left(  \sum_{k=0}^{(n\backslash s)-1} a_{k}P_{n\backslash s}^{k} \right).$$ 
\end{theorem}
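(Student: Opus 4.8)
The plan is to reduce the statement to the single-parameter identity already established in Corollary \ref{coro_J1} and then propagate it through the polynomial $\sum_{k} a_{k}P_{n}^{ks}$ using the algebraic naturality of conjugation together with the mixed-product property of the Kronecker product. Concretely, I would take $\sigma_{n,s}$ to be exactly the permutation furnished by Corollary \ref{coro_J1}, so that we may start from
$$P_{\sigma_{n,s}}^{-1}P_{n}^{s}P_{\sigma_{n,s}}=I_{(n,s)}\otimes P_{n\backslash s},$$
recalling that $P_{\sigma_{n,s}}^{-1}=P_{\sigma_{n,s}}^{T}$ since $P_{\sigma_{n,s}}$ is a permutation matrix.

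First I would observe that conjugation by $P_{\sigma_{n,s}}$ is an algebra automorphism of the matrix ring, hence both additive and multiplicative. Additivity lets me distribute the conjugation across the finite sum, turning the left-hand side into $\sum_{k=0}^{n\backslash s-1}a_{k}\,P_{\sigma_{n,s}}^{-1}P_{n}^{ks}P_{\sigma_{n,s}}$. For each term I would write $P_{n}^{ks}=(P_{n}^{s})^{k}$ and use multiplicativity to obtain
$$P_{\sigma_{n,s}}^{-1}(P_{n}^{s})^{k}P_{\sigma_{n,s}}=\left(P_{\sigma_{n,s}}^{-1}P_{n}^{s}P_{\sigma_{n,s}}\right)^{k}=\left(I_{(n,s)}\otimes P_{n\backslash s}\right)^{k},$$
where the last equality is precisely Corollary \ref{coro_J1}.

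Next I would invoke the mixed-product property of the Kronecker product (as in \cite{steeb2011matrix}), which gives $(I_{(n,s)}\otimes P_{n\backslash s})^{k}=I_{(n,s)}^{k}\otimes P_{n\backslash s}^{k}=I_{(n,s)}\otimes P_{n\backslash s}^{k}$. Substituting back yields $\sum_{k}a_{k}(I_{(n,s)}\otimes P_{n\backslash s}^{k})$, and a final appeal to the bilinearity of $\otimes$ (the map $X\mapsto I_{(n,s)}\otimes X$ commutes with scalar multiplication and finite sums) lets me pull $I_{(n,s)}\otimes(-)$ outside the summation, producing $I_{(n,s)}\otimes\big(\sum_{k}a_{k}P_{n\backslash s}^{k}\big)$, which is the claimed right-hand side.

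I do not anticipate a genuine obstacle: the content of the theorem is entirely carried by Corollary \ref{coro_J1}, and the remaining steps are formal manipulations valid in any matrix algebra. The only point demanding a little care is that the \emph{same} permutation $\sigma_{n,s}$ simultaneously block-diagonalizes every power $P_{n}^{ks}$; but this is automatic once one recognizes that conjugation is multiplicative, so conjugating $P_{n}^{s}$ into block form forces all of its powers into the corresponding block form. The hypothesis that the $a_{k}$ are non-zero plays no role in this identity—it will matter only later, for invertibility-type statements—so I would not use it in the proof.
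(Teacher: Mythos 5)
Your proposal is correct and follows essentially the same route as the paper's own proof: conjugate $P_{n}^{s}$ into $I_{(n,s)}\otimes P_{n\backslash s}$ (you via Corollary \ref{coro_J1}, the paper by re-deriving the same fact from the equality of cycle types of $\tau_{n}^{s}$ and $\nu_{n,s}$ together with the homomorphism $\alpha\mapsto P_{\alpha}$), then propagate to the powers $P_{n}^{ks}$ by multiplicativity of conjugation, apply the mixed-product property $(I_{(n,s)}\otimes P_{n\backslash s})^{k}=I_{(n,s)}\otimes P_{n\backslash s}^{k}$, and finish by distributing $I_{(n,s)}\otimes(-)$ over the sum. Your side remarks — that the same $\sigma_{n,s}$ automatically block-diagonalizes all powers, and that the non-vanishing of the $a_{k}$ is never used — are both accurate.
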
 
\begin{proof}
	Clearly $\nu_{n,s}$ has the same cycle type that $\tau_{n}^{s}$. Then, there exists a permutation $\sigma_{n,s}$ such that 
	$$\sigma_{n,s}^{-1}\tau_{n}^{s}\sigma_{n,s}= \nu_{n,s}.$$
	By taking into account that the application $P: \mathbb{S}_{n}\rightarrow \mathrm{GL}(n)$, where \(\mathbb{S}_{n}\) is the symmetric group and \(\mathrm{GL}(n)\)  is the general lineal group, 
	defined by $P(\sigma)=P_{\sigma}$ consider in the above section is a group homomorphism, we have that 
	$$P_{\sigma_{n,s}}^{-1}P_{n}^s P_{\sigma_{n,s}}=P_{\sigma_{n,s}^{-1}\tau_{n}^{s}\sigma_{n,s}}=P_{\nu_{n,s}}=I_{(n,s)}\otimes P_{n\backslash s}.$$  
	In the same way, if we consider the powers $P_{n}^{s\,k}$ with $k\in[n\backslash s]$, we obtain that $$P_{\sigma_{n,s}}^{-1}P_{n}^{k\,s}P_{\sigma_{n,s}}=(P_{\sigma_{n,s}}^{-1}P_{n}^{s}P_{\sigma_{n,s}})^k=P_{\sigma_{n,s}^{-1}\circ \tau_{n}^{s} \circ\sigma_{n,s}}^k=P_{\nu_{n,s}}^{k}.$$ 
	By the property $(A\otimes B)(C\otimes D)=AC\otimes BD$ of the Kronecker product, we have that 
	$$P_{\sigma_{n,s}}^{-1}P_{n}^{k\,s}P_{\sigma_{n,s}}=P_{\nu_{n,s}}^{k}=(I_{(n,s)}\otimes P_{n\backslash s})^k=I_{(n,s)}\otimes P_{n\backslash s}^{k},$$
	for all $k\in [n\backslash s]$. 
	By taking into account that $A\otimes(B+C)=A\otimes B+ A\otimes C$, we obtain
	$$P_{\sigma_{n,s}}^{-1} \! \left( \sum_{k=0}^{(n\backslash s)-1} \!\! a_{k}P_{n}^{k\,s}\right) \! P_{\sigma_{n,s}}= \!\!
	\sum_{k=0}^{(n\backslash s)-1} \!\! a_{k} \left( I_{(n,s)}\otimes P_{n\backslash s}^{k}\right)  =
	I_{(n,s)}\otimes\left(  \sum_{k=0}^{n\backslash s-1} \!\! a_{k}P_{n\backslash s}^{k}\right) ,$$
	as asserted.
\end{proof}

\medskip

%
%

\section{Determinant and permanent of \(aP_{n}^{s_{1}}+bP_{n}^{s_{2}}\)}\label{Sec_Det_Perm_aP_bP}

A linear subdigraph $L$ of a digraph $D$ is a spanning subdigraph of $D$, in which each vertex has indegree 1 and outdegree 1 (there is exactly one arc get into each vertex, and ther is exactly one arc (possibly the same) get out of each vertex), see \cite{brualdi2008combinatorial}.

\begin{theorem}[\cite{brualdi2008combinatorial}] \label{Teo_a_lo_Brualdi}
	Let $A=\left( a_{ij}\right) $ be a square matrix of order $n$. Then
	\[
	\det \left( A\right) =\sum\limits_{L\,\in\, \mathcal{L}\left( D\left( A \right) \right) } (-1)^{n-c\left( L\right) }w\left( L\right),
	\]
	and
	\[
	\perm{A} = \sum\limits_{L\,\in\, \mathcal{L}\left(D(A)\right)} w(L).
	\]
	Where $ \mathcal{L}\left( D\left( A \right) \right) $ is the set of all linear subdigraphs of the digraph $D(A)$, $c\left( L\right) $ is the number of cycles contained in $L$, and $w\left( L\right) $ is the product of the weights of the edges of $L$.
\end{theorem}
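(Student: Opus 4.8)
The plan is to expand both $\det(A)$ and $\perm(A)$ by their Leibniz-type formulas over the symmetric group and then reorganise each sum according to linear subdigraphs. With the index conventions of this paper we have $\det(A)=\sum_{\sigma\in\mathbb{S}_n}\operatorname{sgn}(\sigma)\prod_{i=0}^{n-1}a_{i,\sigma(i)}$ and $\perm(A)=\sum_{\sigma\in\mathbb{S}_n}\prod_{i=0}^{n-1}a_{i,\sigma(i)}$. The first observation is that any $\sigma$ for which some factor $a_{i,\sigma(i)}$ vanishes contributes $0$, so both sums may be restricted to the set $\Sigma$ of permutations with $\prod_i a_{i,\sigma(i)}\neq 0$.

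The heart of the argument is a bijection between $\Sigma$ and $\mathcal{L}(D(A))$. Given $\sigma\in\Sigma$, selecting for each vertex $i$ the arc $i\to\sigma(i)$ (which exists precisely because $a_{i,\sigma(i)}\neq 0$) produces a spanning subdigraph $L_\sigma$; since $\sigma$ is a bijection, each vertex has out-degree and in-degree exactly $1$, so $L_\sigma$ is a linear subdigraph. Conversely, a linear subdigraph assigns to each $i$ a unique out-neighbour, defining a permutation whose functional digraph is exactly $L$. Under this correspondence the weight $w(L_\sigma)=\prod_i a_{i,\sigma(i)}$ matches the product term, and following arcs $i\to\sigma(i)\to\sigma^2(i)\to\cdots$ shows that the directed cycles of $L_\sigma$ are exactly the cycles of $\sigma$ in its disjoint-cycle decomposition; hence $c(L_\sigma)$ equals the number of cycles of $\sigma$, with fixed points $\sigma(i)=i$ (i.e. nonzero diagonal entries $a_{ii}$) corresponding to loops counted as $1$-cycles.

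It remains to reconcile the signs. I would invoke the standard fact that $\operatorname{sgn}(\sigma)=(-1)^{n-c(\sigma)}$, where $c(\sigma)$ counts all cycles of $\sigma$ including $1$-cycles: a $k$-cycle is a product of $k-1$ transpositions, so a permutation with cycle lengths $k_1,\dots,k_c$ summing to $n$ has parity $(-1)^{\sum_j(k_j-1)}=(-1)^{n-c}$. Substituting this into the restricted Leibniz sum and transporting along the bijection gives $\det(A)=\sum_{L\in\mathcal{L}(D(A))}(-1)^{n-c(L)}w(L)$; dropping the sign factor throughout yields the permanent identity.

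The routine part is checking the degree conditions and the weight/cycle matching. The only point demanding genuine care is the sign bookkeeping: verifying that the cycle count in the subdigraph, which includes loops arising from nonzero diagonal entries, lines up exactly with the cycle count used in the parity formula. Once loops are correctly registered as $1$-cycles, the identity $\operatorname{sgn}(\sigma)=(-1)^{n-c(\sigma)}$ closes the argument.
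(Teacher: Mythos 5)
Your proposal is correct. The paper offers no proof of this statement---it is imported verbatim from \cite{brualdi2008combinatorial}---and your argument (the Leibniz expansions of $\det(A)$ and $\operatorname{perm}(A)$, the bijection $\sigma\leftrightarrow L_{\sigma}$ between permutations with nonvanishing weight product and linear subdigraphs of $D(A)$, and the parity identity $\operatorname{sgn}(\sigma)=(-1)^{n-c(\sigma)}$ with loops correctly counted as $1$-cycles) is exactly the standard proof found in that reference, so there is nothing to add.
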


\begin{theorem} \label{Teo_Determinante_1}
	Let $n$ be a non-negative integer and let $a,b$ be non-zero complex numbers. Then
	\[
	\det \left(aI_{n}+bP_{n} \right)= a^{n}-\left(-b\right)^{n}.
	\]
\end{theorem}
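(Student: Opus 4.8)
The plan is to apply the determinant formula of Theorem~\ref{Teo_a_lo_Brualdi} directly to the digraph $D(aI_{n}+bP_{n})$. By the description given in Section~\ref{subsec_Untangling}, this digraph consists of a loop of weight $a$ at every vertex together with a single Hamiltonian cycle $0\to 1\to\cdots\to n-1\to 0$ all of whose arcs have weight $b$. The whole argument then reduces to enumerating the linear subdigraphs of this particular digraph and summing their signed weights.

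First I would classify the linear subdigraphs. At each vertex $i$ there are exactly two outgoing arcs available: the loop $i\to i$ (weight $a$) and the cycle arc $i\to i+1 \pmod{n}$ (weight $b$). Since a linear subdigraph requires both out-degree and in-degree equal to $1$ at every vertex, the choice made at one vertex propagates around the cycle. Indeed, if the loop at $i$ is selected, then the arc $i-1\to i$ cannot also be used (it would give $i$ in-degree $2$), which forces the loop at $i-1$, and inductively every vertex uses its loop. Symmetrically, if the cycle arc $i\to i+1$ is selected, then the loop at $i+1$ is forbidden, forcing the cycle arc out of $i+1$, and inductively every cycle arc is used. Hence there are exactly two linear subdigraphs: the subdigraph $L_{1}$ consisting of the $n$ loops, and the subdigraph $L_{2}$ consisting of the full Hamiltonian cycle.

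Next I would compute the contribution of each. For $L_{1}$ we have $c(L_{1})=n$ (each loop is a cycle of length one) and $w(L_{1})=a^{n}$, so its signed weight is $(-1)^{n-n}a^{n}=a^{n}$. For $L_{2}$ we have $c(L_{2})=1$ and $w(L_{2})=b^{n}$, so its signed weight is $(-1)^{n-1}b^{n}=-(-b)^{n}$, using $(-b)^{n}=(-1)^{n}b^{n}$. Summing the two contributions via Theorem~\ref{Teo_a_lo_Brualdi} yields $\det(aI_{n}+bP_{n})=a^{n}-(-b)^{n}$, as claimed.

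The only real obstacle is the all-or-nothing propagation claim; everything else is routine bookkeeping of signs. I would make that step airtight by the backward (or forward) induction around the cycle sketched above, emphasizing that it is precisely the single-cycle structure of the $b$-arcs that rules out any ``mixed'' linear subdigraph. As an independent sanity check one may instead diagonalize $aI_{n}+bP_{n}$ by the Fourier matrix, obtaining $\det(aI_{n}+bP_{n})=\prod_{k=0}^{n-1}(a+b\omega^{k})$ with $\omega=e^{2\pi i/n}$, and evaluate this product using $\prod_{k=0}^{n-1}(X-\omega^{k})=X^{n}-1$; this recovers the same closed form and confirms the sign.
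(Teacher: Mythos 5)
Your proposal is correct and follows exactly the paper's own route: the paper proves Theorem~\ref{Teo_Determinante_1} by observing that $D_{n,1}(a,b)$ has only two linear subdigraphs (the $n$ loops and the whole cycle) and invoking Theorem~\ref{Teo_a_lo_Brualdi}, which is precisely your enumeration and sign computation. Your propagation argument and the Fourier-diagonalization sanity check merely flesh out details the paper leaves to the reader.
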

\begin{proof}
	Notice that the digraph \(D_{n,1}(a,b)\) has only two linear subdigraphs, the whole cycle and \(n\) loops. The result follows from Theorem \ref{Teo_a_lo_Brualdi}.
\end{proof}

\begin{corollary}
\label{Teo_Determinante}
Let $n$, $s_{1}$, and $s_{2}$ be integers such that $ 0 \leq s_{1} <s_{2} < n$. Let $a$ and  $b$ be non-zero complex numbers. Then
\[
\det \left(aP_{n}^{s_{1}}+bP_{n}^{s_{2}} \right)= (-1)^{(n-1)s_{1}} \left( a^{n \backslash \left(s_{2}-s_{1}\right)}-\left( -b\right) ^{n \backslash \left( s_{2}-s_{1}\right)}\right)^{(n,s_{2}-s_{1})}.
\]
Hence, $aP_{n}^{s_{1}}+bP_{n}^{s_{2}} $ is singular if and only if \(a^{n \backslash \left(s_{2}-s_{1}\right)}-\left( -b\right) ^{n \backslash \left( s_{2}-s_{1}\right)}=0\).
\end{corollary}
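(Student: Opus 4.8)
The plan is to take determinants on both sides of the factorization supplied by Theorem~\ref{Untagling_theo} and exploit the multiplicativity of $\det$. Writing $s=s_{2}-s_{1}$, that theorem presents $aP_{n}^{s_{1}}+bP_{n}^{s_{2}}$ as a product of four factors: $P_{n}^{s_{1}}$, the permutation matrix $P_{\sigma_{n,s}}$, the Kronecker product $I_{(n,s)}\otimes\big(aI_{n\backslash s}+bP_{n\backslash s}\big)$, and $P_{\sigma_{n,s}}^{T}$. Since the determinant of a product is the product of the determinants, the whole computation reduces to evaluating each of these four factors separately and multiplying the results.

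First I would dispose of the two conjugating permutation matrices: because $P_{\sigma_{n,s}}$ is a permutation matrix, $\det(P_{\sigma_{n,s}})=\pm1$ and $\det(P_{\sigma_{n,s}}^{T})=\det(P_{\sigma_{n,s}})$, so their contributions multiply to $\det(P_{\sigma_{n,s}})^{2}=1$ and cancel. Next, using the value $\det(P_{n})=(-1)^{n-1}$ recorded in the introduction, one gets $\det(P_{n}^{s_{1}})=\big((-1)^{n-1}\big)^{s_{1}}=(-1)^{(n-1)s_{1}}$, which is precisely the scalar prefactor appearing in the statement.

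The central factor is the Kronecker product, and this is the only step that is not pure bookkeeping. Here I would invoke the standard determinant rule $\det(A\otimes B)=(\det A)^{m}(\det B)^{k}$ for $A$ of order $k$ and $B$ of order $m$. Taking $A=I_{(n,s)}$ (order $(n,s)$) and $B=aI_{n\backslash s}+bP_{n\backslash s}$ (order $n\backslash s$), and using $\det I_{(n,s)}=1$, yields $\det\big(I_{(n,s)}\otimes B\big)=(\det B)^{(n,s)}$. Finally Theorem~\ref{Teo_Determinante_1} evaluates $\det B=a^{n\backslash s}-(-b)^{n\backslash s}$. Assembling the three surviving pieces gives exactly the claimed expression, and the singularity criterion follows immediately, since the scalar $(-1)^{(n-1)s_{1}}$ is never zero, so the determinant vanishes precisely when the base $a^{n\backslash s}-(-b)^{n\backslash s}$ of the $(n,s)$-th power does.

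I do not anticipate a genuine obstacle. If one prefers to avoid quoting the Kronecker determinant identity, one can instead observe directly that $I_{(n,s)}\otimes B$ is block diagonal with $(n,s)$ diagonal blocks each equal to $B$, so its determinant is visibly $(\det B)^{(n,s)}$; everything else is a single round of substitution.
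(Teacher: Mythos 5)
Your proposal is correct and follows the same route as the paper: it applies Theorem~\ref{Untagling_theo}, uses multiplicativity of the determinant together with the cancellation of the permutation-matrix conjugation and \(\det\left(P_{n}^{s_{1}}\right)=(-1)^{(n-1)s_{1}}\), evaluates the Kronecker factor as \(\left(\det\left(aI_{n\backslash s}+bP_{n\backslash s}\right)\right)^{(n,s)}\), and finishes with Theorem~\ref{Teo_Determinante_1}. The only difference is that you spell out the Kronecker determinant step and the cancellation of \(\det\left(P_{\sigma_{n,s}}\right)\det\left(P_{\sigma_{n,s}}^{T}\right)=1\), which the paper leaves implicit.
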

Notice that  \(a^{n \backslash \left(s_{2}-s_{1}\right)}-\left( -b\right) ^{n \backslash \left( s_{2}-s_{1}\right)}=0\) if and only if either \(a=\pm b\) when \({n \backslash \left( s_{2}-s_{1}\right)}\) is even or  \(a=-b\) when \({n \backslash \left(s_{2}-s_{1}\right)}\) is odd.

\begin{proof}
Let \(s=s_{2}-s_{1}\). By Theorem \ref{Untagling_theo}
\[
\det\left(aP_{n}^{s_{1}}+bP_{n}^{s_{2}} \right) = \det\left(P_{n}^{s_{1}} \right) \det\left(I_{(n,s)} \otimes \left(aI_{n\backslash s} + bP_{n\backslash s} \right) \right).
\]
Owing to \(\det\left(P_{n}^{i} \right) = (-1)^{(n-1)\, i}\), the result follows from Theorem \ref{Teo_Determinante_1}.
\end{proof}

\begin{example}
If we consider the matrix \(aI_{8}+bP_{8}^{6}\), 
whose associated digraph is in Figure \ref{D806}, is singular if and only if \(|a| = |b\,|\), the same occur with the matrix \(aP_{8}^{1}+bP_{8}^{7}\). 
The matrix \(aI_{9}+bP_{9}^{3}\), whose associated digraph is in Figure \ref{D903}, is singular if and only if \(a= -b\). 
The matrix \(aP_{9}^{2}+bP_{9}^{5}\) is also singular if and only if \(a = -b\).
\end{example}

\begin{corollary} \label{teo_perm2}
Let $n$, $s_{1}$, and $s_{2}$ be integers such that $ 0 \leq s_{1} <s_{2} < n$. Let $a$ and  $b$ be non-zero complex numbers. Then
\[
\perm{ \left(aP_{n}^{s_{1}}+bP_{n}^{s_{2}}\right)}=\left(a^{n\backslash \left(s_{2}-s_{1}\right)} + b^{n\backslash \left(s_{2}-s_{1}\right)}\right)^{(n,s_{2}-s_{2})}.
\]
\end{corollary}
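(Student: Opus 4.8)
The plan is to mirror the reduction used for the determinant in Corollary \ref{Teo_Determinante}, invoking Theorem \ref{Untagling_theo} to untangle the matrix, but to replace the multiplicativity of the determinant by the invariance of the permanent under row and column permutations. Writing $s=s_{2}-s_{1}$, Theorem \ref{Untagling_theo} expresses
$aP_{n}^{s_{1}}+bP_{n}^{s_{2}}$ as $P_{n}^{s_{1}} P_{\sigma_{n,s}} M P_{\sigma_{n,s}}^{T}$, where $M = I_{(n,s)}\otimes\left(aI_{n\backslash s}+bP_{n\backslash s}\right)$.

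First I would recall that the permanent is unchanged when the rows or the columns of a matrix are permuted, that is, $\perm(P_{\alpha}A)=\perm(AP_{\beta})=\perm(A)$ for any permutation matrices $P_{\alpha},P_{\beta}$; this is immediate from the definition $\perm(A)=\sum_{\pi}\prod_{i}a_{i,\pi(i)}$, since left or right multiplication by a permutation matrix merely reindexes the summation and introduces no sign. Peeling off the three permutation factors $P_{n}^{s_{1}}$, $P_{\sigma_{n,s}}$ and $P_{\sigma_{n,s}}^{T}$ then gives $\perm\left(aP_{n}^{s_{1}}+bP_{n}^{s_{2}}\right)=\perm(M)$. This is the crucial departure from the determinant proof: here one must \emph{not} appeal to $\perm(AB)=\perm(A)\perm(B)$, which fails in general, but only to permanent invariance under row/column permutations.

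Next I would use that $M$ is block diagonal with $(n,s)$ identical diagonal blocks $aI_{n\backslash s}+bP_{n\backslash s}$, together with the fact that the permanent of a block-diagonal matrix is the product of the permanents of its blocks (any contributing permutation must carry each block of rows to the same block of columns, so it factors), to obtain $\perm(M)=\perm\left(aI_{n\backslash s}+bP_{n\backslash s}\right)^{(n,s)}$. Finally I would compute $\perm\left(aI_{m}+bP_{m}\right)=a^{m}+b^{m}$ for $m=n\backslash s$ via the permanent formula of Theorem \ref{Teo_a_lo_Brualdi}: as already observed in the proof of Theorem \ref{Teo_Determinante_1}, the digraph $D_{m,1}(a,b)$ has exactly two linear subdigraphs, namely the $m$ loops, of weight $a^{m}$, and the single spanning cycle, of weight $b^{m}$. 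Since $\perm(A)=\sum_{L\in\mathcal{L}(D(A))}w(L)$ carries no sign, the two contributions simply add, giving $a^{m}+b^{m}$ rather than the alternating combination that appeared in the determinant.

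I do not anticipate a serious obstacle, the argument being a faithful parallel of the determinant case; the single point requiring care is precisely the justification for discarding the permutation matrices, which rests on permanent invariance under row and column permutations and not on any multiplicativity property. Assembling the three steps yields $\perm\left(aP_{n}^{s_{1}}+bP_{n}^{s_{2}}\right)=\left(a^{n\backslash(s_{2}-s_{1})}+b^{n\backslash(s_{2}-s_{1})}\right)^{(n,s_{2}-s_{1})}$, as claimed, where I read the exponent $(n,s_{2}-s_{2})$ in the statement as the evident misprint for $(n,s_{2}-s_{1})$.
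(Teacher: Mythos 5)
Your proposal is correct and follows essentially the same route as the paper: the paper's proof likewise combines the untangling decomposition (via Corollary \ref{Teo_Determinante}, which rests on Theorem \ref{Untagling_theo}), the invariance of the permanent under multiplication by permutation matrices (Theorem 1.1 of Minc, exactly the fact you isolate as the crucial replacement for multiplicativity of the determinant), and the linear-subdigraph formula of Theorem \ref{Teo_a_lo_Brualdi} to evaluate $\perm\left(aI_{n\backslash s}+bP_{n\backslash s}\right)=a^{n\backslash s}+b^{n\backslash s}$. Your reading of the exponent $(n,s_{2}-s_{2})$ as a misprint for $(n,s_{2}-s_{1})$ is also right.
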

\begin{proof}
The result follows from Theorem \ref{Teo_a_lo_Brualdi} and Corollary \ref{Teo_Determinante}: if \(A\) is an square matrix and \(Q\) is a permutation matrix of the same order, then \(\perm{QA} = \perm{A}\), see Theorem 1.1. of \cite{minc1984permanents}.
\end{proof}
%
\medskip
%
\section{Inverse of $aP_{n}^{s_1}+bP_{n}^{s_2}$}\label{Sec_Inv_aP_bP}

\begin{theorem} \label{Teo_inv_principal}
		Let $n$ be a non-negative integer and let $a$ and  $b$ be non-zero complex numbers. If \(aI_{n}+bP_{n}\) is non-singular, then 
	\begin{equation}
	\left(aI_{n}+bP_{n}\right)^{-1}=\dfrac{1}{a^{n}-(-b)^{n}} \sum_{i=0}^{n-1}(-1)^{i}b^{i}a^{n-1-i} P_{n}^{i}.
	\end{equation}
\end{theorem}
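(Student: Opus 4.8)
The plan is to verify the formula directly, by multiplying $aI_{n}+bP_{n}$ on the right by the claimed inverse and checking that the product is $I_{n}$. The single structural fact that makes everything work is that $\tau_{n}$ is an $n$-cycle, so $P_{n}^{n}=I_{n}$; this is what lets the top power fold back to the identity and manufacture the scalar $a^{n}-(-b)^{n}$. Since that scalar is assumed non-zero, dividing by it is legitimate, and establishing a right inverse suffices because $aI_{n}+bP_{n}$ is square.

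First I would discard the normalizing factor and compute the product
\[
\left(aI_{n}+bP_{n}\right)\sum_{i=0}^{n-1}(-1)^{i}b^{i}a^{n-1-i}P_{n}^{i}
=\sum_{i=0}^{n-1}(-1)^{i}b^{i}a^{n-i}P_{n}^{i}
+\sum_{i=0}^{n-1}(-1)^{i}b^{i+1}a^{n-1-i}P_{n}^{i+1}.
\]
In the second sum I would reindex by $j=i+1$, which runs from $1$ to $n$, and split off the top term $j=n$; there $P_{n}^{n}=I_{n}$ contributes $(-1)^{n-1}b^{n}I_{n}$. In the first sum I would split off the bottom term $i=0$, which contributes $a^{n}I_{n}$. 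The remaining middle ranges, both over $1\le i\le n-1$, carry coefficients $(-1)^{i}b^{i}a^{n-i}$ and $(-1)^{i-1}b^{i}a^{n-i}$ on the same power $P_{n}^{i}$, so they cancel termwise. What survives is exactly $a^{n}I_{n}+(-1)^{n-1}b^{n}I_{n}=\left(a^{n}-(-b)^{n}\right)I_{n}$, and dividing by $a^{n}-(-b)^{n}$ yields $I_{n}$, as desired.

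The only delicate point, and the step I would write out most carefully, is the boundary bookkeeping: the index shift in the second sum together with the wrap-around $P_{n}^{n}=I_{n}$ must be tracked so that exactly one identity term emerges at the top and the two interior telescoping families align on matching powers. Everything else is the routine cancellation $(-1)^{i}+(-1)^{i-1}=0$. I note that the identity $P_{n}^{n}=I_{n}$, the value $\det(P_{n})=(-1)^{n-1}$, and the singularity criterion $a^{n}-(-b)^{n}\neq 0$ are all recorded earlier in the excerpt, so no further machinery is needed; in particular this gives an alternative, self-contained route to Theorem~\ref{Teo_Determinante_1} in the non-singular case.
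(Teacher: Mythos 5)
Your proposal is correct and coincides with the paper's own proof, which likewise verifies directly that $\left(aI_{n}+bP_{n}\right)\left[\frac{1}{a^{n}-(-b)^{n}}\sum_{i=0}^{n-1}(-1)^{i}b^{i}a^{n-1-i}P_{n}^{i}\right]=I_{n}$, leaving the telescoping computation to the reader. You have simply written out the reindexing and boundary bookkeeping that the paper declares to be ``just check,'' so there is nothing to add.
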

\begin{proof}
	It is just check that
	\[
	\left(aI_{n}+bP_{n}\right) \left[\dfrac{1}{a^{n}-(-b)^{n}} \sum_{i=0}^{n-1}(-1)^{i}b^{i}a^{n-1-i} P_{n}^{i} \right]=I_{n}.\qedhere
	\] 
\end{proof}

\begin{example}
Let $a,b$ non-zero complex numbers. If \(a^{4}-(-b)^{4} \neq 0\), then
\[\left(aI_{4}+bP_{4}\right)^{-1}=\dfrac{1}{a^4-b^4}\Circulante\left(a^3,-ba^2,b^2 a,-b^3\right).\]
If \(a^{5}-(-b)^{5} \neq 0\), then \[\left(aI_{5}+bP_{5}\right)^{-1}=\dfrac{1}{a^5+b^5}\Circulante\left(a^4,-ba^3,b^2 a^2,-b^3 a, b^4\right).\]
\end{example}

In order to obtain an explicit formula for the inverse of a non-singular circulant matrix of the form \(aP_{n}^{s_1}+bP_{n}^{s_2}\), we define
\begin{equation}\label{Ecuacion_Rho}
\rho_{n,s}(i)=(-1)^{\pos_{n,s}(i)}\,\delta_{0,\cycle_{n,s}(i)} \,b^{\pos_{n,s}(i)}\,a^{(n\backslash s)-1-\pos_{n,s}(i)},
\end{equation}
where \(\delta\) is the usual Kronecker delta.
Notice that $\rho_{n,s}$ satisfies the following properties:
    \begin{enumerate}
        \item For $n>0$, $\rho_{n,1}(i)= (-1)^{i}b^{i}a^{n-i-1}$ for all $i=0,\dots,n-1$.
        \item For $0<s<n$, $\rho_{n\backslash s,1}(i)= \rho_{n,s}(i\, s)$ for all $i=0,\dots,(n\backslash s)-1$. 
\end{enumerate}

\begin{corollary}\label{coro_inv}
Let $n$, $s_{1}$, and $s_{2}$ be integers such that $ 0 \leq s_{1} <s_{2} < n$. Let $a$ and  $b$ be non-zero complex numbers such that \(a^{n \backslash \left(s_{2}-s_{1}\right)}-\left( -b\right)^{n \backslash \left( s_{2}-s_{1}\right)}\neq 0\). Then 
\begin{equation}
\left(aP_{n}^{s_1}+bP_{n}^{s_{2}}\right)^{-1} =
\dfrac{1}{a^{n \backslash \left(s_{2}-s_{1}\right)}-(-b)^{n \backslash \left(s_{2}-s_{1}\right)}} \sum_{i=0}^{n-1} \rho_{n,s_{2}-s_{1}}(i+s_{1})P_{n}^{i}.
\end{equation}
\end{corollary}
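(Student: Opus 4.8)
The plan is to conjugate the inverse through the block-diagonal form supplied by Theorem \ref{Untagling_theo} and then re-index. Write $s=s_2-s_1$ and $m=n\backslash s$. By Theorem \ref{Untagling_theo},
\[
aP_n^{s_1}+bP_n^{s_2}=P_n^{s_1}P_{\sigma_{n,s}}\bigl[I_{(n,s)}\otimes(aI_m+bP_m)\bigr]P_{\sigma_{n,s}}^{T},
\]
and since $P_{\sigma_{n,s}}$ is a permutation matrix (so $P_{\sigma_{n,s}}^{-1}=P_{\sigma_{n,s}}^{T}$) and $(P_n^{s_1})^{-1}=P_n^{n-s_1}$, inverting gives
\[
\bigl(aP_n^{s_1}+bP_n^{s_2}\bigr)^{-1}=P_{\sigma_{n,s}}\bigl[I_{(n,s)}\otimes(aI_m+bP_m)^{-1}\bigr]P_{\sigma_{n,s}}^{T}\,P_n^{n-s_1},
\]
using $(I_{(n,s)}\otimes N)^{-1}=I_{(n,s)}\otimes N^{-1}$. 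Under the standing hypothesis $a^{m}-(-b)^{m}\neq0$ the central inverse exists by Theorem \ref{Teo_inv_principal}.

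Next I would insert the explicit inverse from Theorem \ref{Teo_inv_principal},
\[
I_{(n,s)}\otimes(aI_m+bP_m)^{-1}=\frac{1}{a^{m}-(-b)^{m}}\sum_{j=0}^{m-1}(-1)^{j}b^{j}a^{m-1-j}\bigl(I_{(n,s)}\otimes P_m^{j}\bigr),
\]
and collapse the conjugation. By Corollary \ref{coro_J1} we have $I_{(n,s)}\otimes P_m=P_{\sigma_{n,s}}^{T}P_n^{s}P_{\sigma_{n,s}}$, hence $I_{(n,s)}\otimes P_m^{j}=P_{\sigma_{n,s}}^{T}P_n^{sj}P_{\sigma_{n,s}}$ and therefore $P_{\sigma_{n,s}}\bigl(I_{(n,s)}\otimes P_m^{j}\bigr)P_{\sigma_{n,s}}^{T}=P_n^{sj}$. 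Substituting and multiplying by $P_n^{n-s_1}$ (exponents read modulo $n$) yields the clean $m$-term expression
\[
\bigl(aP_n^{s_1}+bP_n^{s_2}\bigr)^{-1}=\frac{1}{a^{m}-(-b)^{m}}\sum_{j=0}^{m-1}(-1)^{j}b^{j}a^{m-1-j}P_n^{sj-s_1}.
\]

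It remains to show this equals $\dfrac{1}{a^{m}-(-b)^{m}}\sum_{i=0}^{n-1}\rho_{n,s}(i+s_1)P_n^{i}$, which is the only genuinely delicate step. By \eqref{Ecuacion_Rho}, $\rho_{n,s}(i+s_1)\neq0$ exactly when $\cycle_{n,s}(i+s_1)=0$, i.e. when $(n,s)\mid(i+s_1)$; for such $i$ the defining relation of $\pos_{n,s}$ gives $(i+s_1)\equiv s\,\pos_{n,s}(i+s_1)\pmod n$, so that $P_n^{i}=P_n^{s\,\pos_{n,s}(i+s_1)-s_1}$ and the coefficient $\rho_{n,s}(i+s_1)$ equals $(-1)^{x}b^{x}a^{m-1-x}$ with $x=\pos_{n,s}(i+s_1)$. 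I would then check that as $i$ ranges over the $m$ indices in $\{0,\dots,n-1\}$ with $(n,s)\mid(i+s_1)$, the value $x=\pos_{n,s}(i+s_1)$ runs bijectively over $\{0,\dots,m-1\}$ (injectivity is immediate from the congruence, and the two index sets have the same cardinality $m$). Re-indexing the nonzero terms by $x$ thus turns the $n$-term sum into $\sum_{x=0}^{m-1}(-1)^{x}b^{x}a^{m-1-x}P_n^{sx-s_1}$, matching the expression obtained above. The main obstacle is precisely this bookkeeping: confirming the support of $\rho_{n,s}(\cdot+s_1)$ and the bijectivity of $\pos_{n,s}$ on each cycle-class, all of which follow from the defining properties of $\cycle_{n,s}$ and $\pos_{n,s}$ recorded before the statement.
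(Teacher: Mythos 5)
Your proposal is correct and follows essentially the same route as the paper's proof: untangle via Theorem \ref{Untagling_theo}, invert the Kronecker block with Theorem \ref{Teo_inv_principal}, collapse the conjugation using Corollary \ref{coro_J1} (the identities \eqref{Ecuacion_Potencias_P_ns1} and \eqref{Ecuacion_Potencias_P_ns2}), and re-index the resulting $(n\backslash s)$-term sum into the $n$-term sum with coefficients $\rho_{n,s}(\cdot+s_1)$. The only difference is that the paper compresses the final re-indexing into the stated property $\rho_{n\backslash s,1}(i)=\rho_{n,s}(i\cdot s)$ and a one-line change of summation index, whereas you verify the support of $\rho_{n,s}(\cdot+s_1)$ and the bijectivity of $\pos_{n,s}$ explicitly --- a harmless, indeed clarifying, expansion of the same step.
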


\begin{proof}
Let \(s=s_{2}-s_{1}\). By Theorem \ref{Untagling_theo} we have that \(\left(aP_{n}^{s_{1}}+bP_{n}^{s_{2}} \right)^{-1}\) is equal to
\[
P_{\sigma_{n,s}}\left[I_{(n,s)} \otimes \left(aI_{n \backslash s } +bP_{n \backslash s}\right)^{-1} \right] P_{\sigma_{n,s}}^{T} P_{n}^{n-s_{1}}.
\]
Thus, by Theorem \ref{Teo_inv_principal} and \eqref{Ecuacion_Rho}
\[
\left(aP_{n}^{s_{1}}+bP_{n}^{s_{2}} \right)^{-1} = P_{\sigma_{n,s}} \left( \sum\limits_{i=0}^{(n\backslash s) - 1} I_{(n,s)} \otimes \frac{\rho_{n\backslash s,1}(i)}{a^{n\backslash s} - (-b)^{n\backslash s}} P_{n\backslash s}^{i} \right) P_{\sigma_{n,s}}^{T} P_{n}^{n-s_{1}}.
\]
By Corollary \ref{coro_J1} we have that 
\begin{equation}\label{Ecuacion_Potencias_P_ns1}
P_{n}^{s} = P_{\sigma_{n,s}} \left(I_{(n,s)} \otimes P_{n\backslash s} \right) P_{\sigma_{n,s}}^{T},
\end{equation}
and
\begin{equation} \label{Ecuacion_Potencias_P_ns2}
P_{\sigma_{n,s}} \left(I_{(n,s)} \otimes P_{n\backslash s}^{i} \right) P_{\sigma_{n,s}}^{T} = P_{n}^{i\, s}.
\end{equation}
Then
\begin{align*}
    \left(aP_{n}^{s_{1}}+bP_{n}^{s_{2}} \right)^{-1} &= \sum\limits_{i=0}^{(n\backslash s) - 1} \frac{\rho_{n\backslash s,1}(i)}{a^{n\backslash s} - (-b)^{n\backslash s}} P_{n}^{(i\, s) - s_{1}} \\
    &= \sum\limits_{i=0}^{(n\backslash s) - 1} \frac{\rho_{n,s}(i\, s)}{a^{n\backslash s} - (-b)^{n\backslash s}} P_{n}^{(i\, s) - s_{1}} \\
    &= \sum_{j=0}^{n-1} \frac{\rho_{n,s}(j+s_{1})}{a^{n \backslash s}-(-b)^{n \backslash s}} P_{n}^{j}. \qedhere
\end{align*}
\end{proof}

\begin{example}
Let \(a\) and \(b\) be two real numbers such \(a^{4}-(-b)^{4}\neq 0\). We will compute \(\left(aP_{12}+bP_{12}^{4}\right)^{-1}\). Since \(4-1=3\) and \(12 \backslash 3=4\), by Theorems \ref{Untagling_theo} and \ref{Teo_inv_principal}, we know that the inverse of \(aI_{12}+bP_{12}^{3}\) is composed essentially by 3 blocks of 
\[\Circulante \left(a^3,-a^2b,ab^2,-b^3\right).\] 
They are merged in a \(12\times 12\) matrix via \(P_{12}\) and \(P_{\sigma_{12,3}}\). Since \(s_{1}=1\), we have that \((12,3)=3\), so there are 3 major cycles of length 4 in \(D\left(aI_{12}+bP_{12}^{3}\right)\):

\begin{center}
\begin{tabular}{ccccccccr}
	0 & $\rightarrow$ & 3 & $\rightarrow$ & 6 & $\rightarrow$ & 9 & $\rightarrow$ & 0.\\
	1 & $\rightarrow$ & 4 & $\rightarrow$ & 7 & $\rightarrow$ & 10 & $\rightarrow$ & 1.\\
	2 & $\rightarrow$ & 5 & $\rightarrow$ & 8 & $\rightarrow$ & 11 & $\rightarrow$ & 2.\\
\end{tabular}
\end{center}
Now we compute the coefficients of the inverse
\begin{center}
	\begin{tabular} {|c|c|c|c|c|}
		\hline
		$i$ & $\cycle_{12,3}(i+1)$ & $\delta_{0,\cycle_{12,3}(i+1)}$ & $\pos_{12,3}(i+1)$ & $\rho_{12,3}(i+1)$\\ \hline
		0   &  1				   & 0							     &  0				 & 0					\\
		1   &  2				   & 0							     &  0				 & 0					\\
		2   &  0 				   & 1							     &  1				 & $-a^2b$					\\
		3   &  1 				   & 0								 &  1				 & 0		   	 		\\
		4   &  2 				   & 0								 &  1				 & 0					\\
		5   &  0 				   & 1							     &  2				 & $ab^2$					\\
		6   &  1 				   & 0							     &  2				 & 0					\\
		7   &  2 				   & 0							     &  2				 & 0					\\
		8   &  0 				   & 1							     &  3				 & $-b^3$					\\
		9   &  1 				   & 0							     &  3				 & 0					\\
		10  &  2 				   & 0							     &  3				 & 0					\\
		11  &  0 				   & 1							     &  0				 & $a^3$					\\ \hline
	\end{tabular}
\end{center}
Therefore, \[\left(aP_{12}+bP_{12}^{4}\right)^{-1}=\dfrac{1}{a^{4}-b^{4}} \Circulante \left(0,0,-ba^2,0,0,b^2a,0,0,-b^3,0,0,a^3\right).\]
%
%
\end{example}

\medskip

\section{Drazin inverse of \(aP_{n}^{s_{1}}+bP_{n}^{s_{2}}\)}\label{Sec_Inv_Dra_aP_bP}
Given a matrix \(A\), the column space of \(A\) is denoted by \(\Rank{A}\) and its dimension by \(\rank{A}\). The null space of \(A\) is denoted by \(\Null{A}\) and its dimension, called nullity, by \(\dnull{A}\). 
The index of a square matrix \(A\), denoted by $\Ind{A}$, is the smallest non-negative integer $k$ for which $ \Rank{A^{k}} = \Rank{A^{k+1}}$. 
It is well-known that a circulant matrix has index 0 or 1, see \cite{davis2012circulant}.  
Let $A$ be a matrix of index $k$, 
the Drazin inverse of $A$, denoted by  \(A^{D}\), is the unique matrix such that	
\begin{enumerate}
	\item $AA^{D}=A^{D}A$,
	\item $A^{k+1}A^{D}=A^{k}$,
	\item $A^{D}AA^{D}=A^{D}$.
\end{enumerate}


In  \cite{encinas2019Circulant} was proved the following Bjerhammar-type condition for the Drazin inverse. We find it handy for checking the Drazin conditions in combinatorial settings.

\begin{theorem}[\cite{encinas2019Circulant}] \label{Teo_JaumePastine}
	Let $A$ and $D$ be square matrices of order \(n\), with $ \Ind{A} = k$, such that $\Null{A^{k}} = \Null{D} $, and $AD=DA$. Then $A^{k+1}D=A^{k}$ if and only if $D^{2}A=D$.
\end{theorem}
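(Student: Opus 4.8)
The plan is to reduce both matrix identities to a single containment statement about the range of $AD - I$, using the commutativity hypothesis as the lever. The guiding observation is that both $A^{k+1}D = A^k$ and $D^2A = D$ can be rewritten with $AD - I$ as a common right factor, after which the two conditions differ only in which null space must absorb that factor's range.

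First I would record two factorizations; the first is immediate and the second uses $AD = DA$:
\[
A^{k+1}D - A^k = A^k(AD - I), \qquad D^2A - D = D(AD - I),
\]
the latter because $D(AD - I) = DAD - D = D^2A - D$, where $DAD = D(AD) = D(DA) = D^2A$. Hence $A^{k+1}D = A^k$ is equivalent to $A^k(AD - I) = 0$, while $D^2A = D$ is equivalent to $D(AD - I) = 0$.

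Next I would translate each vanishing condition into a containment of $\Rank{AD - I}$ in a null space, using that for any matrices $X,Y$ one has $XY = 0$ if and only if $\Rank{Y} \subseteq \Null{X}$: the first condition is equivalent to $\Rank{AD - I} \subseteq \Null{A^k}$, and the second to $\Rank{AD - I} \subseteq \Null{D}$. The hypothesis $\Null{A^k} = \Null{D}$ then forces these two containments to coincide, giving the desired equivalence with no further computation.

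The argument is short, and what I expect to be the crux is simply spotting the common factor $AD - I$ and checking that commutativity is exactly what lets one pull it out on the right of $D^2A - D$; once both identities are in factored form, the hypothesis $\Null{A^k} = \Null{D}$ closes the loop at once. Notice that $\Ind{A} = k$ enters only to fix the value of $k$: the equivalence itself rests solely on $AD = DA$ together with the coincidence $\Null{A^k} = \Null{D}$.
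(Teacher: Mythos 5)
Your proof is correct and complete. Both factorizations check out: $A^{k+1}D - A^k = A^k(AD-I)$ is immediate, and $D^2A - D = D(AD-I)$ consumes the commutativity hypothesis exactly once, in the step $DAD = D(DA) = D^2A$. The translation of $A^k(AD-I)=0$ and $D(AD-I)=0$ into the containments $\Rank{AD-I}\subseteq \Null{A^k}$ and $\Rank{AD-I}\subseteq \Null{D}$ is sound (and consistent with this paper's convention that $\Rank{\cdot}$ denotes the column space), and the hypothesis $\Null{A^k}=\Null{D}$ then yields both implications simultaneously. One point of context: this paper does not actually prove the statement --- it is quoted from \cite{encinas2019Circulant} as a known Bjerhammar-type criterion --- so there is no in-paper argument to compare yours against; your factorization argument stands on its own as a short, self-contained proof. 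Your closing observation is also accurate and worth keeping: the condition $\Ind{A}=k$ plays no logical role in the equivalence beyond fixing the exponent $k$; it matters only in the intended application, where taking $k$ to be the index (here $k=1$ for singular circulant matrices) is what makes the hypothesis $\Null{A^k}=\Null{D}$ the natural one to verify for a candidate Drazin inverse $D$.
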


\begin{theorem} \label{Drazin_General}
	Let $n$, $s_{1}$ and $s_{2}$ be integers such that $ 0 \leq s_{1} <s_{2} < n$. Let $a$ and  $b$ be non-zero complex numbers such that \(a^{n \backslash \left(s_{2}-s_{1}\right)}-\left( -b\right) ^{n \backslash \left( s_{2}-s_{1}\right)} = 0\). Then \(\left(aP_{n}^{s_1}+bP_{n}^{s_{2}}\right)^{D}\) equals
	\begin{equation}
		P_{n}^{n-s_{1}} P_{\sigma_{n,s_{2}-s_{1}}}
		\left[
		I_{(n,s_{2}-s_{1})} \otimes \left(aI_{n \backslash \left(s_{2}-s_{1}\right) } +bP_{n \backslash \left(s_{2}-s_{1}\right) }\right)^{D}
		\right]
		P_{\sigma_{n,s_{2}-s_{1}}}^{T}.
	\end{equation}
\end{theorem}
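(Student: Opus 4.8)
The plan is to write down the right-hand side as an explicit candidate $X$ and check that it satisfies the three defining properties of the Drazin inverse; since that inverse is unique, this suffices. First I would abbreviate $s:=s_{2}-s_{1}$, $Q:=P_{\sigma_{n,s}}$ and $B:=aI_{n\backslash s}+bP_{n\backslash s}$, and set $M:=aP_{n}^{s_{1}}+bP_{n}^{s_{2}}$ and $N:=aI_{n}+bP_{n}^{s}$. A direct computation gives $M=P_{n}^{s_{1}}N$ and $P_{n}^{n-s_{1}}M=N$, while Corollary \ref{coro_J1} yields $Q^{T}P_{n}^{s}Q=I_{(n,s)}\otimes P_{n\backslash s}$ and hence $N=Q\bigl(I_{(n,s)}\otimes B\bigr)Q^{T}$. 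The candidate asserted by the theorem is then precisely $X=P_{n}^{n-s_{1}}N^{D}$: because $Q$ is a permutation matrix with $Q^{-1}=Q^{T}$, the Drazin inverse respects the similarity defining $N$, so $N^{D}=Q\bigl(I_{(n,s)}\otimes B\bigr)^{D}Q^{T}$, and since $I_{(n,s)}\otimes B$ is block diagonal with $(n,s)$ diagonal copies of $B$ one has $\bigl(I_{(n,s)}\otimes B\bigr)^{D}=I_{(n,s)}\otimes B^{D}$, which matches the factor displayed in the statement.

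The engine of the verification is a short list of commutation facts. As $M$, $P_{n}^{s_{1}}$ and $N$ are all circulant, they commute pairwise; moreover $N^{D}$ is a polynomial in $N$, so it too commutes with $P_{n}^{s_{1}}$, and $NN^{D}=N^{D}N$. With these in hand the three axioms follow by bookkeeping. Using $P_{n}^{s_{1}}P_{n}^{n-s_{1}}=I_{n}$ and the commutations, I get $MX=NN^{D}=N^{D}N=XM$, which is the first axiom, and $XMX=P_{n}^{n-s_{1}}N^{D}NN^{D}=P_{n}^{n-s_{1}}N^{D}=X$, which is the third. For the second axiom I would invoke that circulant matrices have index at most $1$, so $\Ind{M}=1$ in the present singular case and it is enough to verify $M^{2}X=M$; the commutations collapse the left-hand side to $P_{n}^{s_{1}}N^{2}N^{D}$, and $N^{2}N^{D}=N$ holds because $\Ind{N}\le 1$ as well.

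The delicate point, and the one I would isolate carefully, is the role of the genuinely non-conjugating left factor $P_{n}^{s_{1}}$. Unlike an invertible similarity, left multiplication by $P_{n}^{s_{1}}$ does not by itself transform the Drazin inverse, and the naive identity $\bigl(P_{n}^{s_{1}}N\bigr)^{D}=P_{n}^{n-s_{1}}N^{D}$ would be false were $P_{n}^{s_{1}}$ an arbitrary invertible matrix instead of one commuting with $N$. What makes every cancellation above go through is exactly that $P_{n}^{s_{1}}$ commutes with both $N$ and $N^{D}$, the latter because $N^{D}$ is a polynomial in the circulant $N$ (equivalently, $N^{D}$ is itself circulant). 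Once this commutation is secured the rest is routine. As an alternative to checking all three axioms by hand, one could establish $MX=XM$ together with $\Null{M}=\Null{X}$ and then apply Theorem \ref{Teo_JaumePastine} to replace the second Drazin condition by $X^{2}M=X$, which under $MX=XM$ coincides with the third axiom; the direct computation above is, however, already short enough.
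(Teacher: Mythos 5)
Your proposal is correct, and it rests on the same structural decomposition as the paper---Theorem \ref{Untagling_theo}, giving \(M=P_{n}^{s_{1}}N\) with \(N=aI_{n}+bP_{n}^{s}\) similar via \(P_{\sigma_{n,s}}\) to \(I_{(n,s)}\otimes B\)---but it transfers the Drazin inverse across that decomposition by a genuinely different mechanism. The paper quotes three black-box facts and chains three equalities: similarity invariance \(\left(XBX^{-1}\right)^{D}=XB^{D}X^{-1}\), the commuting-product rule \(\left(AB\right)^{D}=A^{D}B^{D}=B^{D}A^{D}\) when \(AB=BA\) (from \cite{ben2003generalized}, \cite{campbell2009generalized}), and Wang's Kronecker identity \(\left(A\otimes B\right)^{D}=A^{D}\otimes B^{D}\) from \cite{wang1997weighted}; the factor \(P_{n}^{n-s_{1}}\) appears because \(P_{n}^{s_{1}}\) is invertible and commutes with the circulant \(N\), so its Drazin inverse is its ordinary inverse. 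You keep only the similarity fact, replace Wang's theorem by the elementary observation that \(I_{(n,s)}\otimes B\) is block diagonal so its Drazin inverse is \(I_{(n,s)}\otimes B^{D}\), and replace the commuting-product rule by a direct verification of the three Drazin axioms for the candidate \(X=P_{n}^{n-s_{1}}N^{D}\), using that \(N^{D}\) is a polynomial in \(N\) (hence circulant, hence commutes with \(P_{n}^{s_{1}}\)) and that \(\Ind{N}\le 1\) gives \(N^{2}N^{D}=N\); your bookkeeping for the three axioms checks out. Your isolation of the delicate point is exactly right: left multiplication by \(P_{n}^{s_{1}}\) is not a similarity, and the identity \(\left(P_{n}^{s_{1}}N\right)^{D}=P_{n}^{n-s_{1}}N^{D}\) is precisely the instance of the commuting-product rule that your axiom-checking re-proves by hand. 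What your route buys is self-containedness---no appeal to \cite{wang1997weighted} or to the commuting-product rule, only uniqueness of the Drazin inverse plus facts already in the paper (circulant matrices commute and have index at most \(1\)); what the paper's route buys is brevity, disposing of the theorem in a three-line computation. Your suggested fallback via Theorem \ref{Teo_JaumePastine} would also work (one would additionally check \(\Null{M}=\Null{X}\)), but as you say it is unnecessary given the direct verification.
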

\begin{proof}
	The following two facts are well-known. If \(A=XBX^{-1}\), then \(A^{D}=XB^{D}X^{-1}\). If \(AB=BA\), then \(\left(AB\right)^{D}=B^{D}A^{D}=A^{D}B^{D}\). See \cite{ben2003generalized} or \cite{campbell2009generalized}.
		
	In 1997, Wang proved that 
	\(\left(A\otimes B\right)^{D}=A^{D}\otimes B^{D}\) and \(\Ind{A\otimes B}=\max\{\Ind{A},\Ind{B}\}\), see Theorem 2.2. of \cite{wang1997weighted}.
	Let \(s=s_2-s_1\). Then, by Theorem \ref{Untagling_theo}
%
%
%
%
	\begin{align*}
	\left(aP_{n}^{s_1}+bP_{n}^{s_{2}}\right)^{D} 
	&=P_{n}^{n-s_{1}}
	\left[
	P_{\sigma_{n,s}}
	\left[
	I_{(n,s)} \otimes \left(aI_{n \backslash s } +bP_{n \backslash s }\right)
	\right]
	P_{\sigma_{n,s}}^{T}
	\right]^{D}.\\
	{} &= 
	P_{n}^{n-s_{1}} 
	P_{\sigma_{n,s}}
	\left[
	I_{(n,s)} \otimes \left(aI_{n \backslash s } +bP_{n \backslash s }\right)
	\right]^{D}
	P_{\sigma_{n,s}}^{T}\\
	{} &=
	P_{n}^{n-s_{1}} 
	P_{\sigma_{n,s}}
	\left[
	I_{(n,s)} \otimes \left(aI_{n \backslash s } +bP_{n \backslash s }\right)^{D}
	\right]
	P_{\sigma_{n,s}}^{T}. \qedhere
	\end{align*}
\end{proof}

Therefore, we just need to study the Drazin inverse of the matrices of the form \(aI_{n}+bP_{n}\). 

By Corollary \ref{Teo_Determinante}, we know that $aP_{n}^{s_{1}}+bP_{n}^{s_{2}} $ is singular if and only if \(a^{n \backslash s}-\left( -b\right) ^{n \backslash  s}=0\), where \(s=s_{2}-s_{1}\). If \(n \backslash  s\) is even, then \(a^{n \backslash s}-(-b)^{n \backslash s}= 0\) if and only if \(|a|=|b|\). If \(n \backslash s\) is odd, then \(a^{n \backslash s}-(-b)^{n \backslash s}=0\) if and only if \(a=-b\). Hence, the matrices of the form \(a\left(I_{n}-P_{n} \right)\) are always singular, but the matrices of form \(a\left(I_{n}+P_{n} \right)\) are singular if and only if \(n\) is even. All of these facts reduce the calculus of Drazin inverse of singular circulant matrices of form \(aP_{n}^{s_{1}}+bP_{n}^{s_{2}}\) to the calculus of Drazin inverse of just two matrices: \(I_{n}-P_{n}\) and \(I_{2n}+P_{2n}\).
%
%

\subsection{Drazin inverse of \(I_{n}-P_{n}\)}

\begin{theorem} \label{teo_drazin_1}
Let \(n\) be a non-negative integer. The Drazin inverse of \(aI_{n}-aP_{n}\) is
	\begin{equation}\label{Eq_teo_drazin_1}
	\left(I_{n}-P_{n}\right)^{D}:=\dfrac{1}{2\,n}\sum_{i=0}^{n-1} \left(n-2\,i-1\right) P_{n}^{i}.
	\end{equation}
\end{theorem}
\begin{example}
Let $a$ be a non-zero complex number. Thus, we have that
	\[\left(aI_{4}-aP_{4}\right)^D=\tfrac{1}{8a}\Circulante \left(3,1,-1,-3\right),\]
	and
	\[\left(aI_{5}-aP_{5}\right)^D=\tfrac{1}{10a}\Circulante \left(4,2,0,-2,-4\right). \]
\end{example}

In order to prove this theorem, we will use some polynomial tools. The idea is to prove that both matrices in \eqref{Eq_teo_drazin_1} have the same null space, and then use the Theorem \ref{Teo_JaumePastine}.

By \(1\!\!1_{n}\), we denoted the vector of all ones. The easy proof of the following lemma is left to the reader. 
\begin{lemma}
Let \(n\) be a non-negative integer. Then
	\[
	1\!\!1_{n} \in \Null{I_{n}-P_{n}} \cap \Null{\dfrac{1}{2\,n}\sum_{i=0}^{n-1} \left(n-2\,i-1\right) P_{n}^{i}}.
	\]
\end{lemma}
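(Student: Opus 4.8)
The plan is to verify directly that $1\!\!1_n$ lies in both null spaces by a short computation, since both matrices are circulant and act on $1\!\!1_n$ in a transparent way.

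First I would handle the left-hand null space. Since $P_n$ is the permutation matrix of the cyclic shift $\tau_n$, it simply permutes the coordinates of any vector, so $P_n 1\!\!1_n = 1\!\!1_n$ because all entries of $1\!\!1_n$ are equal. Consequently
\[
\left(aI_n - aP_n\right)1\!\!1_n = a\,1\!\!1_n - a\,1\!\!1_n = 0,
\]
which shows $1\!\!1_n \in \Null{aI_n - aP_n}$.

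Next I would treat the right-hand matrix. Again using $P_n^{i}\,1\!\!1_n = 1\!\!1_n$ for every $i$ (each power of $P_n$ is a permutation matrix fixing the all-ones vector), I get
\[
\left(\frac{1}{2an}\sum_{i=0}^{n-1}(n-2i-1)P_n^{i}\right)1\!\!1_n
=\frac{1}{2an}\left(\sum_{i=0}^{n-1}(n-2i-1)\right)1\!\!1_n.
\]
The scalar factor collapses to zero: the coefficient sum is $\sum_{i=0}^{n-1}(n-2i-1) = n(n-1) - 2\cdot\frac{(n-1)n}{2} = n(n-1)-n(n-1)=0$. Hence the product is the zero vector and $1\!\!1_n$ lies in the second null space as well.

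Taking the intersection of the two containments yields the claim. The proof is entirely routine; the only point requiring a line of attention is the evaluation of the arithmetic-progression sum $\sum_{i=0}^{n-1}(n-2i-1)$, and the symmetry of the coefficients (the term for index $i$ cancels the term for index $n-1-i$) makes even that immediate. I expect no genuine obstacle here, which is consistent with the authors leaving the proof to the reader.
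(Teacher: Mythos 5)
Your proof is correct and is precisely the direct verification the paper has in mind when it states that ``the direct proof of the next lemma is left to the reader'': you use that every power $P_{n}^{i}$ is a permutation matrix fixing $1\!\!1_{n}$, so the first matrix kills $1\!\!1_{n}$ by cancellation and the second does because the coefficient sum $\sum_{i=0}^{n-1}(n-2i-1)$ vanishes. Both computations are carried out accurately, so there is nothing to add.
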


Every circulant matrix \(\Circulante \left(c_{0},\dots,c_{n-1}\right)\) has its associated polinomial \(P_{C}(x)=\sum_{i=0}^{n-1} c_{i}x^{i}\). Ingleton, in 1955, proved the following proposition.

\begin{proposition}[Proposition 1.1 in \cite{ingleton1956rank}]\label{prop_ingleton}
	The rank of a circulant matrix \(C\) of order \(n\) is \(n-d\), where \(d\) is the degree of the greatest common divisor of \(x^{n}-1\) and the associated polynomial of \(C\).
\end{proposition}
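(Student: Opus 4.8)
The statement to prove is Proposition~\ref{prop_ingleton} (Ingleton's rank formula): the rank of a circulant matrix $C=\Circulante(c_0,\dots,c_{n-1})$ of order $n$ equals $n-d$, where $d$ is the degree of $\gcd\!\left(x^n-1,\,P_C(x)\right)$ and $P_C(x)=\sum_{i=0}^{n-1}c_i x^i$ is the associated polynomial.

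The plan is to exploit the ring isomorphism between circulant matrices and the quotient ring $\mathbb{C}[x]/(x^n-1)$. First I would recall that the map sending $P_n$ to $x$ identifies $\Circulante(c_0,\dots,c_{n-1})$ with the polynomial $P_C(x)$ modulo $x^n-1$, since $P_n$ satisfies $P_n^n=I_n$ and has minimal polynomial exactly $x^n-1$ (its eigenvalues are the $n$ distinct $n$-th roots of unity). Consequently $C$ is a polynomial in $P_n$, and the kernel of $C$ as a linear operator on $\mathbb{C}^n$ is what I must compute. The key observation is that $P_n$ is diagonalizable with the $n$ distinct roots of unity $\zeta_0,\dots,\zeta_{n-1}$ as eigenvalues, so $C=P_C(P_n)$ is simultaneously diagonalized in the same eigenbasis, and its eigenvalues are precisely $P_C(\zeta_0),\dots,P_C(\zeta_{n-1})$.

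From this diagonalization the rank is immediate: since $C$ is diagonalizable, $\rank(C)$ equals the number of nonzero eigenvalues, i.e. the number of $n$-th roots of unity $\zeta$ with $P_C(\zeta)\neq 0$. Equivalently, the nullity $\dnull(C)$ is the number of common roots of $x^n-1$ and $P_C(x)$ among the $\zeta_j$. The final step is the translation to the $\gcd$: because $x^n-1=\prod_{j=0}^{n-1}(x-\zeta_j)$ splits into distinct linear factors over $\mathbb{C}$, the degree $d$ of $g(x):=\gcd(x^n-1,P_C(x))$ is exactly the number of roots $\zeta_j$ shared by $x^n-1$ and $P_C(x)$, namely $d=\#\{j : P_C(\zeta_j)=0\}$. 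Therefore $\dnull(C)=d$ and $\rank(C)=n-d$.

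The only delicate point — and the step I expect to require the most care — is justifying that the multiplicity count is correct, i.e. that $d$ really equals the \emph{number} of common roots and not something weighted by multiplicity. This is where the separability of $x^n-1$ over $\mathbb{C}$ is essential: since $x^n-1$ has no repeated roots, any common factor $g$ of $x^n-1$ and $P_C$ divides $x^n-1$ and hence is itself a product of \emph{distinct} linear factors $\prod_{\zeta\in S}(x-\zeta)$ over the shared root set $S$, so $\deg g=|S|$. With this squarefree structure in hand, the correspondence between the degree of the $\gcd$, the number of vanishing eigenvalues, and the nullity of $C$ is exact, and no multiplicity subtleties arise.
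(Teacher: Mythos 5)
Your proof is correct. The paper does not prove this proposition at all --- it imports it by citation from Ingleton (1956) --- and your argument (identify $C$ with $P_C(P_n)$, diagonalize $P_n$ over $\mathbb{C}$ with the $n$ distinct $n$-th roots of unity as eigenvalues so that the nullity of $C$ counts the roots of unity annihilated by $P_C$, then use squarefreeness of $x^n-1$ to equate that count with $\deg \gcd\left(x^n-1, P_C(x)\right)$) is precisely the classical proof of this fact, with the one genuinely delicate point, the multiplicity-free structure of the gcd, correctly identified and handled; note only that for a real circulant the passage through $\mathbb{C}$ is harmless because rank is invariant under field extension.
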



This means that in order to know the rank of some circulant matrix, it is enough to know how many $n$-th roots of the unit are roots of its associated polynomial.

Let us consider the polynomials
\begin{equation}\label{Poly_1}
H_{j}^n(x)=\sum_{i=0}^{n-1} \left(n-2 \, (i+j)-1\right)\,x^i,
\end{equation}
for $j=0,\ldots,n-1$. Notice that  $H_{0}^n(x)$ is the associated polynomial of the circulant matrix $\sum_{i=0}^{n-1} \left(n-2 \, i-1\right) P_{n}^{i}$.

We denote by 
$$\Omega_{\ell}=\{x\in \mathbb{C}: x^{\ell}=1\},$$ 
the set of $\ell$-th root of the unit. 
Notice that, $$\Omega_{n}=\{\omega_{n}^k: k=0,\ldots,n-1\},$$ 
where \(\omega=\exp (\frac{2\pi i}{n})\), 
i.e. \( \,\omega_{n}^k \) are exactly the roots of the polynomial  $p(x)=x^n-1$.
Moreover, they are all simple roots of $p(x)=x^n-1$, because they are all different and $p$ has degree $n$.  
Moreover, if we denote
$$
\Phi_{n}(x):=\dfrac{x^n-1}{x-1}=\sum_{i=0}^{n-1}x^{i},
$$
then $\omega_{n}^k$ are all simple roots of $\Phi_{n}$ for any $k\ne 0$. In terms of derivatives, this means that 
$$
\Phi_{n}(\omega)=0\quad \text{ and } \quad \Phi_{n}'(\omega)\ne 0,
$$
for all $\omega \in \Omega_{n}\smallsetminus\{1\}$.

\begin{proposition}\label{Denis1}
Let \(n\) be a non-negative integer. Then
	$$H_{j}^n(\omega)\ne 0,$$
	for all $j=0,\ldots,n-1$ and $\omega \in \Omega_{n}\smallsetminus\{1\}$.
\end{proposition} 
\begin{proof}
	We have that 
	$$H_{j}^n(x)=\sum_{i=0}^{n-1} \left(n-2 \, (i+j)-1\right) \, x^i= (n-2 \, j-1)\sum_{i=0}^{n-1} x^i-2\sum_{i=0}^{n-1}i \, x^{i}.$$
	Notice that $\sum_{i=0}^{n-1}i \, x^{i}=x \,\Phi'_n(x)$, and so we obtain that
	\begin{equation}\label{eq Pj-Phi}
	H_{j}^{n}(x)=(n-2 \, j-1) \, \Phi_{n}(x)- 2 \, x \, \Phi'_{n}(x).
	\end{equation}
	Now, assume that $H_{j}^n(\omega)=0$ for some $\omega\in \Omega_{n}\smallsetminus\{1\}$, thus \eqref{eq Pj-Phi} implies that
	$$0=(n-2 \, j-1) \, \Phi_{n}(\omega)-2 \, \omega \, \Phi'_{n}(\omega).$$
	Hence, we have that $\omega \, \Phi'_{n}(\omega)=0$, since $\Phi_{n}(\omega)=0$. But this cannot occur since $\omega\ne 0$ and 
	$\omega$ are all simple roots of  $\Phi_{n}$. Therefore $H_{j}^n(\omega)\ne 0$
	for all $j=0,\ldots,n-1$ and $\omega \in \Omega_{n}\smallsetminus\{1\}$, as asserted.
\end{proof}

We are ready to prove the main result of the subsection.
\begin{proof}
	\textit{\!of Theorem \ref{teo_drazin_1}: }Since index of all singular circulant matrices is $1$ and the rank of \(I_{n}-P_{n}\) is clearly \(n-1\), by Propositions \ref{prop_ingleton} and \ref{Denis1}, we have that
	\[
	\Null{I_{n}-P_{n}} = \Null{\dfrac{1}{2 \, n}\sum_{i=0}^{n-1} \left(n-2 \, i-1\right) P_{n}^{i}}.
	\]
	By Theorem \ref{Teo_JaumePastine}, we just need to check that
	\[
	\left(I_{n}-P_{n}\right)^2\left(\dfrac{1}{2 \, n}\sum_{i=0}^{n-1} \left(n-2 \, i-1\right) P_{n}^{i}\right)=I_{n}-P_{n}.
	\]
	which is left to the reader.
\end{proof}

In order to obtain an explicit formula for the Drazin inverse of the circulant matrices of the form \(P_{n}^{s_1}-P_{n}^{s_2}\) we define
\begin{equation}\label{Ecuacion_Rho_1}
\rho_{n,s}^{*}(i):=\delta_{0,\cycle_{n,s}(i)} \, \left(n\backslash s-2  \, \pos_{n,s}(i)-1\right),
\end{equation}
where \(\delta\) is the usual Kronecker delta. In this case, $\rho^*$ satisfies the following:
\begin{enumerate}
    \item For $n>0$, \(\rho_{n,1}^{*}(i)=(n-2i-1)\) for all \(i=0,\dots,n-1\).     
    \item For $0<s<n$, $\rho_{n \backslash s,1}^{*}(i)= \rho_{n,s}^{*}(i \, s)$ for all $i=0,\ldots,(n\backslash s)-1$. 
\end{enumerate}

\begin{corollary}\label{coro_Drazin_1}
Let $n$, $s_{1}$ and $s_{2}$ be integers such that $ 0 \leq s_{1} <s_{2} < n$. Then 
\begin{equation}
\left(P_{n}^{s_1}-P_{n}^{s_{2}}\right)^{D} =
\dfrac{1}{2 \, (n \backslash\left(s_{2}-s_{1}\right))} \sum_{i=0}^{n-1} \rho_{n,s_{2}-s_{1}}^{*}(i+s_{1})P_{n}^{i}.
\end{equation}
\end{corollary}

\begin{proof}
    Let \(s=s_{2}-s_{1}\). By Theorem \ref{Drazin_General} we have that \(\left(P_{n}^{s_{1}}-P_{n}^{s_{2}} \right)^{D}\) is equal to
    \[
    P_{n}^{n-s_{1}} P_{\sigma_{n,s}}\left[I_{(n,s)} \otimes \left(I_{n \backslash s} -P_{n \backslash s} \right)^{D} \right] P_{\sigma_{n,s}}^{T}.
    \]
    Thus, by Theorem \ref{teo_drazin_1} and  \eqref{Ecuacion_Rho_1}
    \[
    \left(P_{n}^{s_{1}}-P_{n}^{s_{2}} \right)^{D} = P_{n}^{n-s_{1}} P_{\sigma_{n,s}} \left( \sum\limits_{i=0}^{(n\backslash s) - 1} I_{(n,s)} \otimes \frac{\rho_{n\backslash s,1}^{*}(i)}{2 \, (n\backslash s)} P_{n\backslash s}^{i} \right) P_{\sigma_{n,s}}^{T}.
    \]
    Hence, by Corollary \ref{coro_J1}, and expressions \eqref{Ecuacion_Potencias_P_ns1} and  \eqref{Ecuacion_Potencias_P_ns2}
    \begin{align*}
        \left(P_{n}^{s_{1}}-P_{n}^{s_{2}} \right)^{D} &= \sum\limits_{i=0}^{(n\backslash s) - 1} \frac{\rho_{n\backslash s,1}^{*}(i)}{2 \, (n\backslash s)} P_{n}^{(i \, s) - s_{1}} \\
        &= \sum\limits_{i=0}^{(n\backslash s) - 1} \frac{\rho_{n,s}^{*}(i \, s)}{2 \, (n\backslash s)} P_{n}^{(i \, s) - s_{1}} \\
        &= \sum\limits_{j=0}^{n-1} \frac{\rho_{n,s}^{*}(j+s_{1})}{2 \, (n\backslash s)} P_{n}^{j}. \qedhere
        \end{align*}
\end{proof}

\begin{example}
For example, we compute \(\left(aP_{12}-aP_{12}^{4}\right)^{D}\). Since \(12 \backslash 3=4\). By Theorems \ref{Drazin_General} and \ref{teo_drazin_1}, we know that the Drazin inverse is essentially 3 blocks of \(\Circulante \left(3,1,-1,-3\right)\) merge in a \(12\times 12\) matrix via \(P_{12}\) and \(P_{\sigma_{12,3}}\). 
\begin{center}
	\begin{tabular} {|c|c|c|c|c|}
		\hline
		$i$ & $\cycle_{12,3}(i+1)$ & $\delta_{0,\cycle_{12,3}(i+1)}$ & $\pos_{12,3}(i+1)$ & $\rho_{12,3}^{*}(i+1)$\\ \hline
		0   &  1				   & 0							     &  0				 & 0					\\
		1   &  2				   & 0							     &  0				 & 0					\\
		2   &  0 				   & 1							     &  1				 & 1					\\
		3   &  1 				   & 0								 &  1				 & 0		   	 		\\
		4   &  2 				   & 0								 &  1				 & 0					\\
		5   &  0 				   & 1							     &  2				 & -1					\\
		6   &  1 				   & 0							     &  2				 & 0					\\
		7   &  2 				   & 0							     &  2				 & 0					\\
		8   &  0 				   & 1							     &  3				 & -3					\\
		9   &  1 				   & 0							     &  3				 & 0					\\
		10  &  2 				   & 0							     &  3				 & 0					\\
		11  &  0 				   & 1							     &  0				 & 3					\\ \hline
	\end{tabular}
\end{center}

Therefore, \(\left(aP_{12}-aP_{12}^{4}\right)^{D}=\dfrac{1}{8 \, a} \Circulante \left(0,0,1,0,0,-1,0,0,-3,0,0,3\right)\).
%
%
\end{example}	

\subsection{Drazin inverse of \(I_{2n}+P_{2n}\)}

\begin{theorem} \label{teo_drazin_2}
Let \(n\) be a non-negative integer. The Drazin inverse of \(I_{2n}+P_{2n}\) is 
\begin{equation}\label{Eq_teo_drazin_2}
\left(I_{2n}+P_{2n}\right)^{D}=\frac{1}{4 \, n}\sum_{i=0}^{2 \, n-1} (-1)^{i}\left(2 \, n-2 \, i-1\right) P_{2n}^{i}.
\end{equation}
\end{theorem}
The idea of the proof is the same as before, i.e. we want to show that both matrices in \eqref{Eq_teo_drazin_2} have the same null space, and then use Theorem \ref{Teo_JaumePastine}.

We define \(\pm1\!\!1_{n}=((-1)^i)_{i=0}^{2n-1}\in \mathbb{R}^{2n}\), e particular vector of ones and minus ones. 
The following lemma, can be proved easily and it is left to the reader.  
\begin{lemma} \label{Lema_null_2}
Let \(n\) be a non-negative integer. Then
\[
\pm1\!\!1_{n} \in \Null{I_{2n}+P_{2n}} \cap \Null{\dfrac{1}{4 \,n}\sum_{i=0}^{2n-1} (-1)^{i}\left(2 \, n-2 \, i-1\right) P_{2n}^{i}}.
\]
\end{lemma}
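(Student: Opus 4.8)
The plan is to verify the two claimed membership relations directly, exploiting that $\pm1\!\!1_{n}$ is an eigenvector of $P_{2n}$. First I would record how $P_{2n}$ acts on a column vector: since $(P_{\alpha})_{\alpha(j),j}=1$, one has $(P_{2n}x)_i = x_{\tau_{2n}^{-1}(i)} = x_{\modulo{(i+1)}{2n}}$, using $\tau_{2n}^{k}(i)=\modulo{(i-k)}{2n}$ from the introduction. Applying this to $x=\pm1\!\!1_{n}$, whose $i$-th entry is $(-1)^i$, gives $(P_{2n}x)_i = (-1)^{\modulo{(i+1)}{2n}}$; for $0\le i\le 2n-2$ this equals $-(-1)^i$, and for $i=2n-1$ the index wraps to $0$ and the value is $(-1)^0 = 1 = -(-1)^{2n-1}$. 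Hence $P_{2n}(\pm1\!\!1_{n}) = -(\pm1\!\!1_{n})$, so $\pm1\!\!1_{n}$ is an eigenvector of $P_{2n}$ with eigenvalue $-1$. The only point requiring care is the wrap-around at the last coordinate, which is reconciled precisely because $2n$ is even.

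With the eigenvalue in hand the first membership is immediate: $(aI_{2n}+aP_{2n})(\pm1\!\!1_{n}) = a(\pm1\!\!1_{n}) + a\bigl(-(\pm1\!\!1_{n})\bigr) = 0$, so $\pm1\!\!1_{n}\in\Null{aI_{2n}+aP_{2n}}$. For the second matrix I would iterate the eigenvalue relation to obtain $P_{2n}^{i}(\pm1\!\!1_{n}) = (-1)^i(\pm1\!\!1_{n})$ for every $i$. Substituting this into the action of $\frac{1}{4an}\sum_{i=0}^{2n-1}(-1)^i(2n-2i-1)P_{2n}^{i}$ collapses each sign factor $(-1)^i(-1)^i = 1$, so the matrix sends $\pm1\!\!1_{n}$ to the scalar multiple $\frac{1}{4an}\bigl(\sum_{i=0}^{2n-1}(2n-2i-1)\bigr)(\pm1\!\!1_{n})$.

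It then remains only to check that the scalar $\sum_{i=0}^{2n-1}(2n-2i-1)$ vanishes. This is a short arithmetic-series computation: $\sum_{i=0}^{2n-1}(2n-1) = 2n(2n-1)$ and $2\sum_{i=0}^{2n-1} i = 2n(2n-1)$ cancel, giving $0$; equivalently the summand $2n-2i-1$ is antisymmetric under $i\mapsto 2n-1-i$, so the terms pair off to zero. Therefore $\pm1\!\!1_{n}$ is annihilated by the second matrix as well, completing the membership in the intersection. I do not expect any genuine obstacle here: the whole argument is elementary once the eigenvector observation is made, and the single subtle step is the cyclic wrap-around in verifying $P_{2n}(\pm1\!\!1_{n})=-(\pm1\!\!1_{n})$.
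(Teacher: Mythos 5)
Your proof is correct, and since the paper leaves this lemma as an easy exercise for the reader, your direct verification is exactly the intended argument: observing that $\pm1\!\!1_{n}$ is a $(-1)$-eigenvector of $P_{2n}$ (with the wrap-around working precisely because the order $2n$ is even), so that the first matrix annihilates it immediately and the second collapses to the scalar $\frac{1}{4an}\sum_{i=0}^{2n-1}(2n-2i-1)=0$. All steps check out, including the index computation $(P_{2n}x)_i = x_{\modulo{(i+1)}{2n}}$ from the paper's convention $(P_{\alpha})_{\alpha(j),j}=1$.
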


Let us consider the polynomials
\begin{equation}
\widehat{H}_{j}^{2n}(x)=\sum_{i=0}^{2n-1} (-1)^{i+j}\left(2 \, n-2 \, (i+j)-1\right)\,x^i,
\end{equation}
for $j=0,\ldots,2\,n-1$. As before, notice that $\widehat{H}_{0}^{2n}(x)$ is the associated polynomial of the circulant matrix $\sum_{i=0}^{2n-1} (-1)^{i}\left(2 \, n-2 \, i-1\right) P_{2n}^{i}$ the following result assures that its rank is $2\,n-1$.

\begin{proposition} \label{Denis2}
	Let $n$ be a non-negative integer. Then 
	$$\widehat{H}_{j}^{2n}(\omega)\ne 0$$
	for all $j=0,\ldots,2\,n-1$ and $\omega\in \Omega_{2n}\smallsetminus \{-1\}$.
\end{proposition}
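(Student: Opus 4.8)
The plan is to deduce Proposition \ref{Denis2} from the already-established Proposition \ref{Denis1} by the change of variable $x \mapsto -x$, exploiting that the modulus $2n$ is even. Writing $H_{j}^{2n}$ for the polynomial of \eqref{Poly_1} with $n$ replaced by $2n$, the first step is to record the elementary identity
\[
\widehat{H}_{j}^{2n}(x) = (-1)^{j}\, H_{j}^{2n}(-x),
\]
which follows directly from the definitions: substituting $-x$ for $x$ in $H_{j}^{2n}$ multiplies the $i$-th term by $(-1)^{i}$, turning the coefficient $2n-2(i+j)-1$ into $(-1)^{i}\bigl(2n-2(i+j)-1\bigr)$, and factoring out $(-1)^{j}$ reproduces exactly the sign $(-1)^{i+j}$ appearing in $\widehat{H}_{j}^{2n}$. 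In particular, $\widehat{H}_{j}^{2n}(\omega)=0$ if and only if $H_{j}^{2n}(-\omega)=0$.

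Second, I would observe that because $2n$ is even we have $(-\omega)^{2n}=\omega^{2n}=1$, so $\Omega_{2n}$ is closed under negation; moreover $\omega\mapsto-\omega$ maps $\Omega_{2n}\smallsetminus\{-1\}$ bijectively onto $\Omega_{2n}\smallsetminus\{1\}$, since $\omega\ne-1$ precisely when $-\omega\ne1$. Now fix $\omega\in\Omega_{2n}\smallsetminus\{-1\}$; then $-\omega\in\Omega_{2n}\smallsetminus\{1\}$, and Proposition \ref{Denis1} applied with modulus $2n$ yields $H_{j}^{2n}(-\omega)\ne0$ for every $j=0,\ldots,2n-1$. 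By the identity above, $\widehat{H}_{j}^{2n}(\omega)=(-1)^{j}H_{j}^{2n}(-\omega)\ne0$, which is exactly the assertion.

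Alternatively, one can imitate the proof of Proposition \ref{Denis1} directly, replacing $\Phi_{n}$ by $\Psi_{2n}(x):=\frac{x^{2n}-1}{x+1}$, whose roots are precisely the elements of $\Omega_{2n}\smallsetminus\{-1\}$, all simple. A short computation gives $\widehat{H}_{j}^{2n}(x)=(-1)^{j}\bigl[2x\,\Psi_{2n}'(x)-(2n-2j-1)\,\Psi_{2n}(x)\bigr]$, so that $\widehat{H}_{j}^{2n}(\omega)=0$ together with $\Psi_{2n}(\omega)=0$ would force $\omega\,\Psi_{2n}'(\omega)=0$, which is impossible since $\omega\ne0$ and $\omega$ is a simple root of $\Psi_{2n}$. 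Either way, the only genuine point to verify is that the modulus is even, so that negation preserves $\Omega_{2n}$ and interchanges the excluded roots $1$ and $-1$; this parity is exactly why the statement concerns $2n$ and excises $-1$ rather than $1$. I expect no real obstacle beyond this careful bookkeeping of signs.
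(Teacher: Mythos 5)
Your proof is correct and follows essentially the same route as the paper: the identity $\widehat{H}_{j}^{2n}(x)=(-1)^{j}H_{j}^{2n}(-x)$, the observation that negation permutes $\Omega_{2n}$ (exchanging the excluded roots $1$ and $-1$ because the modulus is even), and an appeal to Proposition \ref{Denis1} with modulus $2n$. Your alternative argument via $\Psi_{2n}(x)=\frac{x^{2n}-1}{x+1}$ is a correct but minor variant, simply transporting the proof of Proposition \ref{Denis1} through the same substitution, and in fact your phrasing is slightly cleaner than the paper's, which asserts an unnecessary (and, for $j\ne 0$, inaccurate) biconditional where only one implication is needed.
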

\begin{proof}
	On the first hand, the linear transformation $T(x)=-x$ for $x\in \mathbb{C}$ is a permutation on $\Omega_{2n}$, i.e.\@ we have that
	\begin{equation}\label{Invarianza -}
	\omega \text{ is a $(2\,n)$-th root of $1$}\quad \Longleftrightarrow\quad -\omega \text{ is a $(2\,n)$-th root of $1$}.
	\end{equation}
	On the other hand, if $C_{i+j}=2\,n-2\,(i+j)-1$ we have that 
	$$\widehat{H}_{j}^{2n}(x)=\sum_{i=0}^{2n-1} (-1)^{i+j} C_{i+j}x^{i}=(-1)^{j}\sum_{i=0}^{2n-1} C_{i+j}(-x)^{i}=(-1)^{j}H_{j}^{2n} (-x),$$
	where $H_{j}^{2n}$ as in \eqref{Poly_1}. By \eqref{Invarianza -} and Proposition \ref{Denis1} we obtain that
	$$\widehat{H}_{j}^{2n}(\omega)=0 \quad \Longleftrightarrow\quad H_{j}^{2n}(-\omega)=0 \quad \Longleftrightarrow\quad \omega=-1.$$
	So, if $\omega\ne -1$, then $\widehat{H}_{j}^{2n}(\omega)\ne 0$, as desired.
\end{proof}

We are ready to prove the main result of the subsection.
\begin{proof}
\textit{of Theorem \ref{teo_drazin_2}: } The rank of \(I_{2n}+P_{2n}\) is clearly \(2\,n-1\). Hence, by Lemma  \ref{Lema_null_2} and Proposition \ref{Denis2}
\[
\Null{I_{2n}+P_{2n}} = \Null{\dfrac{1}{4\,n}\sum_{i=0}^{2n-1} (-1)^{i}\left(2\,n-2\,i-1\right) P_{2n}^{i}}.
\]
By Theorem \ref{Teo_JaumePastine}, it suffices to check that
\[
\left(I_{2n}+P_{2n}\right)^2\left(\dfrac{1}{4\,n}\sum_{i=0}^{2n-1} (-1)^{i} \left(2\,n-2\,i-1\right) P_{2n}^{i}\right)=I_{2n}+P_{2n}.
\]
which is left to the reader.
\end{proof}

\begin{corollary}\label{coro_Drazin_2}
Let $n$, $s_{1}$, and $s_{2}$ be integers such that $ 0 \leq s_{1} <s_{2} < 2n$. Then 
\begin{equation}
\left(P_{2n}^{s_1}+P_{2n}^{s_{2}}\right)^{D} = \dfrac{1}{2\,(2\,n \backslash\left(s_{2}-s_{1}\right))} \sum_{i=0}^{2n-1} (-1)^{i+s_{1}}\rho_{2n,s_{2}-s_{1}}^{*}(i+s_{1})P_{2n}^{i}.
\end{equation}
\end{corollary}

\begin{example}
Let us compute \(\left(aP_{12}+aP_{12}^{4}\right)^{D}\). Since \(12 \backslash 3=4\). By Theorems \ref{Drazin_General} and \ref{teo_drazin_1}, we know that the Drazin inverse is essentially 3 blocks of \(\Circulante \left(3,-1,-1,3\right)\) merge in a \(12\times 12\) matrix via \(P_{12}\) and \(P_{\sigma_{12,3}}\). Since \(4-1=3\), then \(12\backslash 3=4\), and \((12,3)=3\). Therefore, we have
\begin{center}
	\begin{tabular} {|c|c|c|c|c|}
		\hline
		$i$ & $\cycle_{12,3}(i+1)$ & $\delta_{0,\cycle_{12,3}(i+1)}$ & $\pos_{12,3}(i+1)$& $(-1)^{i+1}\rho_{12,3}^{*}(i+1)$\\ \hline
		0   &  1				   & 0							     &  0				 & 0					\\
		1   &  2				   & 0							     &  0				 & 0					\\
		2   &  0 				   & 1							     &  1				 & -1					\\
		3   &  1 				   & 0								 &  1				 & 0		   	 		\\
		4   &  2 				   & 0								 &  1				 & 0					\\
		5   &  0 				   & 1							     &  2				 & -1					\\
		6   &  1 				   & 0							     &  2				 & 0					\\
		7   &  2 				   & 0							     &  2				 & 0					\\
		8   &  0 				   & 1							     &  3				 & 3					\\
		9   &  1 				   & 0							     &  3				 & 0					\\
		10  &  2 				   & 0							     &  3				 & 0					\\
		11  &  0 				   & 1							     &  0				 & 3					\\ \hline
	\end{tabular}
\end{center}
Therefore, \(\left(aP_{12}+aP_{12}^{4}\right)^{D}=\dfrac{1}{8\,a} \Circulante \left(0,0,-1,0,0,-1,0,0,3,0,0,3\right)\).
\end{example}

\medskip

\section{Block circulant matrices with two parameters}\label{Sec_Blck_Circ}

Let $n,s_{1}$ and $s_{2}$ be non-negative integers such that $0\leq s_{1} < s_{2} \leq n-1$. Let $A$ and  $B$ be two square matrices of order \(r\). We will use the basic properties of the Kronecker product, see \cite{steeb2011matrix}. The matrices of the form \(P^{s_{1}}_{n}\otimes A + P^{s_{2}}_{n} \otimes B\) we call block circulant matrices with two parameters.

For example, for the matrices
\begin{equation} \label{Ejem_Circ_Block1}
	A =
	\left[
	\begin{array}{cc}
		1 & 2 \\
		2 & 1
	\end{array}
	\right]
	\quad \text{and} \quad
	B =
	\left[
	\begin{array}{rr}
		-1 & 3 \\ 
		3 & -1
	\end{array}
	\right],
\end{equation}
we have that \(P_{8} \otimes A + P^{3}_{8} \otimes B\) is
\begin{equation*}\label{Matriz_Ejemplo1}
	\left[
	\begin{array}{rr|rr|rr|rr|rr|rr|rr|rr}
		0 & 0 & \textcolor{red}{\textbf{1}} & \textcolor{red}{\textbf{2}} & 0 & 0 & \textcolor{blue}{\textbf{-1}} & \textcolor{blue}{\textbf{3}} & 0 & 0 & 0 & 0 & 0 & 0 & 0 & 0 \\
		0 & 0 & \textcolor{red}{\textbf{2}} & \textcolor{red}{\textbf{1}} & 0 & 0 & \textcolor{blue}{\textbf{3}} & \textcolor{blue}{\textbf{-1}} & 0 & 0 & 0 & 0 & 0 & 0 & 0 & 0 \\
		\hline
		0 & 0 & 0 & 0 & \textcolor{red}{\textbf{1}} & \textcolor{red}{\textbf{2}} & 0 & 0 & \textcolor{blue}{\textbf{-1}} & \textcolor{blue}{\textbf{3}} & 0 & 0 & 0 & 0 & 0 & 0 \\
		0 & 0 & 0 & 0 & \textcolor{red}{\textbf{2}} & \textcolor{red}{\textbf{1}} & 0 & 0 & \textcolor{blue}{\textbf{3}} & \textcolor{blue}{\textbf{-1}} & 0 & 0 & 0 & 0 & 0 & 0 \\
		\hline
		0 & 0 & 0 & 0 & 0 & 0 & \textcolor{red}{\textbf{1}} & \textcolor{red}{\textbf{2}} & 0 & 0 & \textcolor{blue}{\textbf{-1}} & \textcolor{blue}{\textbf{3}} & 0 & 0 & 0 & 0 \\
		0 & 0 & 0 & 0 & 0 & 0 & \textcolor{red}{\textbf{2}} & \textcolor{red}{\textbf{1}} & 0 & 0 & \textcolor{blue}{\textbf{3}} & \textcolor{blue}{\textbf{-1}} & 0 & 0 & 0 & 0 \\
		\hline
		0 & 0 & 0 & 0 & 0 & 0 & 0 & 0 & \textcolor{red}{\textbf{1}} & \textcolor{red}{\textbf{2}} & 0 & 0 & \textcolor{blue}{\textbf{-1}} & \textcolor{blue}{\textbf{3}} & 0 & 0 \\
		0 & 0 & 0 & 0 & 0 & 0 & 0 & 0 & \textcolor{red}{\textbf{2}} & \textcolor{red}{\textbf{1}} & 0 & 0 & \textcolor{blue}{\textbf{3}} & \textcolor{blue}{\textbf{-1}} & 0 & 0 \\
		\hline
		0 & 0 & 0 & 0 & 0 & 0 & 0 & 0 & 0 & 0 & \textcolor{red}{\textbf{1}} & \textcolor{red}{\textbf{2}} & 0 & 0 & \textcolor{blue}{\textbf{-1}} & \textcolor{blue}{\textbf{3}} \\
		0 & 0 & 0 & 0 & 0 & 0 & 0 & 0 & 0 & 0 & \textcolor{red}{\textbf{2}} & \textcolor{red}{\textbf{1}} & 0 & 0 & \textcolor{blue}{\textbf{3}} & \textcolor{blue}{\textbf{-1}} \\
		\hline
		\textcolor{blue}{\textbf{-1}} & \textcolor{blue}{\textbf{3}} & 0 & 0 & 0 & 0 & 0 & 0 & 0 & 0 & 0 & 0 & \textcolor{red}{\textbf{1}} & \textcolor{red}{\textbf{2}} & 0 & 0 \\
		\textcolor{blue}{\textbf{3}} & \textcolor{blue}{\textbf{-1}} & 0 & 0 & 0 & 0 & 0 & 0 & 0 & 0 & 0 & 0 & \textcolor{red}{\textbf{2}} & \textcolor{red}{\textbf{1}} & 0 & 0 \\
		\hline
		0 & 0 & \textcolor{blue}{\textbf{-1}} & \textcolor{blue}{\textbf{3}} & 0 & 0 & 0 & 0 & 0 & 0 & 0 & 0 & 0 & 0 & \textcolor{red}{\textbf{1}} & \textcolor{red}{\textbf{2}}\\
		0 & 0 & \textcolor{blue}{\textbf{3}} & \textcolor{blue}{\textbf{-1}} & 0 & 0 & 0 & 0 & 0 & 0 & 0 & 0 & 0 & 0 & \textcolor{red}{\textbf{2}} & \textcolor{red}{\textbf{1}} \\
		\hline
		\textcolor{red}{\textbf{1}} & \textcolor{red}{\textbf{2}} & 0 & 0 & \textcolor{blue}{\textbf{-1}} & \textcolor{blue}{\textbf{3}} & 0 & 0 & 0 & 0 & 0 & 0 & 0 & 0 & 0 & 0 \\
		\textcolor{red}{\textbf{2}} & \textcolor{red}{\textbf{1}} & 0 & 0 & \textcolor{blue}{\textbf{3}} & \textcolor{blue}{\textbf{-1}} & 0 & 0 & 0 & 0 & 0 & 0 & 0 & 0 & 0 & 0
	\end{array}
	\right]
\end{equation*}

The following tell us that is enough to study the circulant matrices of the form \(I_{n}\otimes A + P^{s}_{n} \otimes B\). We have that
\begin{equation}\label{Ecuacion_C_nqs_Cns_q}
	P^{s_{1}}_{n}\otimes A + P^{s_{2}}_{n} \otimes B = P_{n}^{s_{1}} \otimes I_{r} \left(I_{n}\otimes A + P^{s_{2}-s_{1}}_{n} \otimes B\right).
\end{equation}
Thus, if \(P_{n}^{s_{1}}\otimes A +P_{n}^{s_{2}}\otimes B\) is non-singular, then
\begin{equation}\label{Corolario_Inversa}
	(P_{n}^{s_{1}}\otimes A +P_{n}^{s_{2}}\otimes B)^{-1} = (I_{n}\otimes A + P_{n}^{s_{2}-s_{1}} \otimes B)^{-1}  P_{n}^{n-s_{1}} \otimes I_{r}.
\end{equation}

The next is a direct consequence of the following fact: \(i-j = n-s  \!\mod n\) if and only if \(j-i= s \!\mod n\).
\begin{equation} \label{Lema_C_{n,n-j}=C^{T}_{n,j}}
	\left(I_{n}\otimes A + P^{s}_{n} \otimes B \right)^{T} = I_{n}\otimes A^{T} + P^{n-s}_{n} \otimes B^{T}.
\end{equation}

Let \(s=s_{2}-s_{1}\). Note that by Theorem \ref{Untagling_theo} we have that the matrix \(I_{(n,s)} \otimes \left(I_{n\backslash s} \otimes A + P_{n\backslash s} \otimes B\right)\) is equal to
\begin{equation}\label{Ecuacion_Factorizacion_Bloques}
\left(P_{\sigma_{n,s}}^{T} \otimes I_{r}\right) \left(P_{n}^{n-s_{1}} \otimes I_{r} \right) \left(P_{n}^{s_{1}} \otimes A + P^{s_{2}}_{n} \otimes B\right) \left(P_{\sigma_{n,s}} \otimes I_{r}\right).
\end{equation}

For the matrices given in \ref{Ejem_Circ_Block1} we have that
\[\left(P_{\sigma_{8,2}}^{T} \otimes I_{2}\right) \left(P_{8}^{7} \otimes I_{2}\right) \left(P_{8} \otimes A + P^{3}_{8} \otimes B\right) \left(P_{\sigma_{8,2}} \otimes I_{2} \right) =\]
\[
\left[
\begin{array}{rrrrrrrr|rrrrrrrr}
\textcolor{red}{\textbf{1}} & \textcolor{red}{\textbf{2}} & \textcolor{blue}{\textbf{-1}} & \textcolor{blue}{\textbf{3}} & 0 & 0 & 0 & 0 & 0 & 0 & 0 & 0 & 0 & 0 & 0 & 0 \\
\textcolor{red}{\textbf{2}} & \textcolor{red}{\textbf{1}} & \textcolor{blue}{\textbf{3}} & \textcolor{blue}{\textbf{-1}} & 0 & 0 & 0 & 0 & 0 & 0 & 0 & 0 & 0 & 0 & 0 & 0 \\
0 & 0 & \textcolor{red}{\textbf{1}} & \textcolor{red}{\textbf{2}} & \textcolor{blue}{\textbf{-1}} & \textcolor{blue}{\textbf{3}} & 0 & 0 & 0 & 0 & 0 & 0 & 0 & 0 & 0 & 0 \\
0 & 0 & \textcolor{red}{\textbf{2}} & \textcolor{red}{\textbf{1}} & \textcolor{blue}{\textbf{3}} & \textcolor{blue}{\textbf{-1}} & 0 & 0 & 0 & 0 & 0 & 0 & 0 & 0 & 0 & 0 \\
0 & 0 & 0 & 0 & \textcolor{red}{\textbf{1}} & \textcolor{red}{\textbf{2}} & \textcolor{blue}{\textbf{-1}} & \textcolor{blue}{\textbf{3}} & 0 & 0 & 0 & 0 & 0 & 0 & 0 & 0 \\
0 & 0 & 0 & 0 & \textcolor{red}{\textbf{2}} & \textcolor{red}{\textbf{1}} & \textcolor{blue}{\textbf{3}} & \textcolor{blue}{\textbf{-1}}& 0 & 0 & 0 & 0 & 0 & 0 & 0 & 0 \\
\textcolor{blue}{\textbf{-1}} & \textcolor{blue}{\textbf{3}} & 0 & 0 & 0 & 0 & \textcolor{red}{\textbf{1}} & \textcolor{red}{\textbf{2}} & 0 & 0 & 0 & 0 & 0 & 0 & 0 & 0 \\
\textcolor{blue}{\textbf{3}} & \textcolor{blue}{\textbf{-1}} & 0 & 0 & 0 & 0 & \textcolor{red}{\textbf{2}} & \textcolor{red}{\textbf{1}} & 0 & 0 & 0 & 0 & 0 & 0 & 0 & 0 \\
\hline
0 & 0 & 0 & 0 & 0 & 0 & 0 & 0 & \textcolor{red}{\textbf{1}} & \textcolor{red}{\textbf{2}} & \textcolor{blue}{\textbf{-1}} & \textcolor{blue}{\textbf{3}} & 0 & 0 & 0 & 0 \\
0 & 0 & 0 & 0 & 0 & 0 & 0 & 0 & \textcolor{red}{\textbf{2}} & \textcolor{red}{\textbf{1}} & \textcolor{blue}{\textbf{3}} & \textcolor{blue}{\textbf{-1}} & 0 & 0 & 0 & 0 \\
0 & 0 & 0 & 0 & 0 & 0 & 0 & 0 & 0 & 0 & \textcolor{red}{\textbf{1}} & \textcolor{red}{\textbf{2}} & \textcolor{blue}{\textbf{-1}} & \textcolor{blue}{\textbf{3}} & 0 & 0 \\
0 & 0 & 0 & 0 & 0 & 0 & 0 & 0 & 0 & 0 & \textcolor{red}{\textbf{2}} & \textcolor{red}{\textbf{1}} & \textcolor{blue}{\textbf{3}} & \textcolor{blue}{\textbf{-1}} & 0 & 0 \\
0 & 0 & 0 & 0 & 0 & 0 & 0 & 0 & 0 & 0 & 0 & 0 & \textcolor{red}{\textbf{1}} & \textcolor{red}{\textbf{2}} & \textcolor{blue}{\textbf{-1}} & \textcolor{blue}{\textbf{3}} \\
0 & 0 & 0 & 0 & 0 & 0 & 0 & 0 & 0 & 0 & 0 & 0 & \textcolor{red}{\textbf{2}} & \textcolor{red}{\textbf{1}} & \textcolor{blue}{\textbf{3}} & \textcolor{blue}{\textbf{-1}} \\
0 & 0 & 0 & 0 & 0 & 0 & 0 & 0 & \textcolor{blue}{\textbf{-1}} & \textcolor{blue}{\textbf{3}} & 0 & 0 & 0 & 0 & \textcolor{red}{\textbf{1}} & \textcolor{red}{\textbf{2}}\\
0 & 0 & 0 & 0 & 0 & 0 & 0 & 0 & \textcolor{blue}{\textbf{3}} & \textcolor{blue}{\textbf{-1}} & 0 & 0 & 0 & 0 & \textcolor{red}{\textbf{2}} & \textcolor{red}{\textbf{1}}
\end{array}
\right].
\]

\medskip

\section{Determinant of a block circulant matrices with two parameters}\label{Sec_Blck_Circ_Det}

From Schur complement we have the following Proposition.
\begin{proposition} \label{Proposicion_Schur}
Suppose \(E,F,G\) and \(H\) are respectively \(p\times p\), \(p\times q\), \(q\times p\) and \(q\times q\) matrices, with \(H\) non-singular. If
\[
M =
\left[
\begin{array}{cc}
E & F \\
G & H
\end{array}
\right],
\]
then
\[
\det(M) = \det(H) \det(E - FH^{-1}G).
\]
\end{proposition}

Let \(k\) be a non-negative integer. We denoted with \(\vec{v}_{k}\) a \(k\times 1\) vector and with \(\vec{u}_{k}\) a \(1\times k\) vector given by
\[
(\vec{v}_{k})_{i\, 0} = \left\{
\begin{array}{ll}
1, & \text{if } i = k-1,\\
0,& \text{otherwise},
\end{array}
\right.
\]
and
\[
(\vec{u}_{k})_{0\, j} = \left\{
\begin{array}{ll}
1, & \text{if } j = 0,\\
0,& \text{otherwise}.
\end{array}
\right.
\]

In the proof of the following Lemma we use the notation given in \cite{bapat2014graphs}, Let \(K\) be a \(m\times n\) matrix. If \(S\subseteq [m]\), \(T\subseteq [n]\), then \(K\left[S|T \right]\) will denote the submatrix of \(K\) determined by the rows corresponding to \(S\) and the columns corresponding to \(T\).

\begin{lemma} \label{Lema_Determinante}
Let \(n\) be a non-negative integer. Let \(A\) and \(B\) two non-zero square matrices or order \(r\). If \(AB=BA\) and \(A\) is non-singular, then
\[
\det (I_{n}\otimes A +P_{n}\otimes B) = \det(A)^{n} \det (I_{r}- (-A^{-1}B)^{n}).
\]
\end{lemma}
\begin{proof}
For \(1\leq i \leq n-1\), we defined the following sequences of matrices:
\begin{enumerate}
	\item \(E_{i} := I_{n}\left[[n-i] | [n-i] \right] \otimes A + P_{n}\left[[n-i] | [n-i] \right] \otimes B\);
	\item \(F_{i} := \vec{v}_{n-i} \otimes B\);
	\item \(G_{i} := \vec{u}_{n-i} \otimes (-A)^{-i+1}B^{i}\) and
	\item \(H_{i} := A\);
\end{enumerate}
Note that
\[
I_{n}\otimes A +P_{n}\otimes B =
\left[
\begin{array}{cc}
E_{1} & F_{1} \\
G_{1} & H_{1}
\end{array}
\right]
\text{ and}
\quad
E_{i-1}- F_{i-1}H_{i-1}^{-1}G_{i-1} =
\left[
\begin{array}{cc}
E_{i} & F_{i} \\
G_{i} & H_{i}
\end{array}
\right],
\]
for \(2\leq i \leq n-1\). Therefore by Proposition \ref{Proposicion_Schur}
\[
\det (I_{n}\otimes A +P_{n}\otimes B) = \det(H_{1}) \det (E_{1}-F_{1}H_{1}^{-1}G_{1}),
\]
and
\[
\det(E_{i-1}- F_{i-1}H_{i-1}^{-1}G_{i-1}) = \det(H_{i}) \det (E_{i}- F_{i}H_{i}^{-1}G_{i}),
\]
for \(2\leq i \leq n-1\). Therefore
\begin{align*}
\det (I_{n}\otimes A +P_{n}\otimes B) &= \prod\limits_{i=1}^{n-1} \det(H_{i}) \det (E_{i}- F_{i}H_{i}^{-1}G_{i}) \\
&= (\det(A))^{n-2} \det(E_{n-1}- F_{n-1}H_{n-1}^{-1}G_{n-1}) \\
&= \det(A)^{n} \det (I_{r}- (-A^{-1}B)^{n}).\qedhere
\end{align*}
\end{proof}

Given the matrix \(P_{n}^{s_{1}}\otimes A +P_{n}^{s_{2}}\otimes B = (P_{n}^{s_{1}} \otimes I_{r}) (I_{n}\otimes A +P_{n}^{s_{2}-s_{1}}\otimes B)\). Note that
\[
\det(P_{n}^{s_{1}}\otimes A +P_{n}^{s_{2}}\otimes B) = \det (P_{n}^{s_{1}} \otimes I_{r}) \det(I_{n}\otimes A +P_{n}^{s_{2}-s_{1}}\otimes B),
\]
and \(\det (P_{n}^{s_{1}} \otimes I_{r}) \det(I_{n}\otimes A +P_{n}^{s_{2}-s_{1}}\otimes B)\) is equal to
\begin{equation}\label{Ecuacion_Determinante_Igual}
	\det (P_{n}^{s_{1}} \otimes I_{r}) \det((P_{\sigma_{n,s_{2}-s_{1}}}^{T} \otimes I_{r}) (I_{n}\otimes A +P_{n}^{s_{2}-s_{1}}\otimes B) (P_{\sigma_{n,s_{2}-s_{1}}} \otimes I_{r}))
\end{equation}

\begin{theorem} \label{Teo_Determinante_Bloques}
	Let \(n,s_{1}\) and \(s_{2}\) be non-negative integers such that \(0 \leq s_{1} < s_{2}\leq n-1\) and let \(A\) and \(B\) two non-zero square matrices or order \(r\) such that \(AB=BA\). We have the following:
	\begin{enumerate}
		\item if \(A\) is non-singular, then \(\det (P_{n}^{s_{1}}\otimes A +P_{n}^{s_{2}}\otimes B) =\)
		\[
		\det (P_{n}^{s_{1}})^{r} \det(A)^{n} \left[\det \left( I_{r}- ( -A^{-1}B)^{n \backslash (s_{2}-s_{1})}\right) \right]^{(n,s_{2}-s_{1})}.
		\]
		\item If \(B\)  is non-singular and \(A\) is singular, then \(\det (P_{n}^{s_{1}}\otimes A +P_{n}^{s_{2}}\otimes B) =\)
		\[
		\det (P_{n}^{s_{2}})^{r} \det (B)^{n} \left[\det \left(I_{r}- ( -B^{-1}A)^{n \backslash (n-s_{2}+s_{1})}\right)	\right]^{(n,n-s_{2}+s_{1})}.
		\]
	\end{enumerate}
\end{theorem}
\begin{proof}
Let \(s=s_{2}-s_{1}\). Assume that \(A\) is non-singular. By (\ref{Ecuacion_Determinante_Igual}) and Lemma \ref{Lema_Determinante} we have that \(\det(P_{n}^{s_{1}}\otimes A +P_{n}^{s_{2}}\otimes B)=\)
\begin{align*}
	& = \det (P_{n}^{s_{1}} \otimes I_{r}) \det(I_{n}\otimes A +P_{n}^{s}\otimes B) \\
	& = \det (P_{n}^{s_{1}} \otimes I_{r}) \det((P_{\sigma_{n,s}}^{T} \otimes I_{r}) (I_{n}\otimes A +P_{n}^{s}\otimes B) (P_{\sigma_{n,s}} \otimes I_{r})) \\
	& = \det (P_{n}^{s_{1}} \otimes I_{r}) \det \left( I_{(n,s)} \otimes (I_{n\backslash s} \otimes A + P_{n\backslash s} \otimes B)\right) \\
	& = \det (P_{n}^{s_{1}} \otimes I_{r}) \left( \det(I_{n\backslash s} \otimes A + P_{n\backslash s} \otimes B)\right)^{(n,s)} \\
	& = \det (P_{n}^{s_{1}})^{r} \det(A)^{n} \left(\det \left(I_{r}- ( -A^{-1}B)^{n \backslash s}\right)\right)^{(n,s)}.
\end{align*}
	
Now, if \(B\) is non-singular and \(A\) is singular the proof is analogous to the previous proof.\qedhere
\end{proof}

For example, for \(A\) and \(B\) given in (\ref{Ejem_Circ_Block1}), we have that
\[
\det\left(P_{8} \otimes A + P_{8}^{3} \otimes B \right)= \left(\det(P_{8}) \right)^{2} (\det(A))^{8} \left(\det\left(I_{2} - (-A^{-1}B)^{4} \right) \right)^{2}.
\]
Since,
\begin{equation*}
I_{2} - (-A^{-1}B)^{4} = \frac{81}{20728}
\left[
\begin{array}{rr}
-1296 & -1295 \\
-1295  & -1296
\end{array}
\right],
\quad
\det\left(I_{2} - (-A^{-1}B)^{4} \right) = \frac{1}{2591}
\end{equation*}
and \(\det(A) = -3\). Therefore, \(\det\left(P_{8} \otimes A + P_{8}^{3} \otimes B \right)= (-3)^8 \left(\frac{1}{2591} \right)^{2}\).

\begin{corollary}\label{detCnqs}
Let \(n,s_{1}\) and \(s_{2}\) be non-negative integers such that \(0 \leq s_{1} < s_{2}\leq n-1\) and let \(A\) and \(B\) be two non-zero square matrices or order \(r\) such that \(AB=BA\). \(\det(P_{n}^{s_{1}}\otimes A +P_{n}^{s_{2}}\otimes B) =0\) if and only if
\begin{enumerate}
	\item \(A,B\) are singular, or
	\item \(A\) is non-singular, but \(I_{r}-(- A^ {-1}B)^{(n \backslash (s_{2}-s_{1}))}\) is singular, or
	\item \(A\) is singular, but \(B\) is non-singular and \(I_{r}-( - B^ {-1}A)^{(n \backslash (n-s_{2}+s_{1}))} \) is singular.
\end{enumerate}
\end{corollary}

\medskip

\section{Inverse of block circulant matrices with two parameters}\label{Sec_Blck_Circ_Det_Inv}

\begin{theorem}\label{Teorema_Principal_Bloques}
Let \(n\) be a non-negative integer and let \(A\) and \(B\) be non-zero square matrices of order \(r\), such that \(AB=BA\). If \(I_{n} \otimes A + P_{n} \otimes B\) and \(A^{n} - (-B)^{n}\) are non-singular, then
\begin{equation}
\left(I_{n} \otimes A + P_{n} \otimes B \right)^{-1} = \sum\limits_{i=0}^{n-1} P_{n}^{i} \otimes \left[(-1)^{i} B^{i}A^{n-i-1} \left(A^{n} - (-B)^{n} \right)^{-1} \right]
\end{equation}
\end{theorem}
\begin{proof}
It is just check that
\[
\left(I_{n} \otimes A + P_{n} \otimes B \right) \!\! \left[\sum\limits_{i=0}^{n-1} \!\! P_{n}^{i} \otimes \!\! \left[(-1)^{i} B^{i}A^{n-i-1} \left(A^{n} - (-B)^{n} \right)^{-1} \right]\right] \!\! = \!\! I_{n}\otimes I_{r}.\qedhere
\]
\end{proof}

In order to obtain an explicit formula for the inverse of non-singular circulant matrices of the form \(P_{n}^{s_1} \otimes A + P_{n}^{s_2} \otimes B\) we define
\begin{equation*}
\Omega_{n,s}(i)=(-1)^{\pos_{n,s}(i)}\,\delta_{0,\cycle_{n,s}(i)} \,B^{\pos_{n,s}(i)}\,A^{n\backslash s-1-\pos_{n,s}(i)} \left[A^{n\backslash s} - (-B)^{n\backslash s} \right]^{-1},
\end{equation*}
assuming that \(\left[A^{n\backslash s} - (-B)^{n\backslash s} \right]^{-1}\) exists. Notice that \(\Omega\) satisfies the following properties:
\begin{enumerate}
	\item For \(n>0\), \(\Omega_{n,1}(i)=(-1)^{i} B^{i}A^{n-i-1}\), for all \(i=0,\dots,n-1\).
	\item For \(0<s<n\), \(\Omega_{n\backslash s,1}(i) = \Omega_{n,s}(i\, s)\), for all \(i=0,\dots,(n\backslash s)-1\).
\end{enumerate}

\begin{theorem} \label{Teo_inv_principal_bloques}
Let $n,s_{1},s_{2}$ and \(r\) be a non-negative integers such that \(0\leq s_{1} < s_{2} \leq n-1\). Let $A$ and  $B$ be two square matrices of order \(r\) such that \(AB=BA\). If \(P_{n}^{s_{1}}\otimes A + P_{n}^{s_{2}}\otimes B\) and \(A^{n\backslash s_{2}-s_{1}}-(-B)^{n\backslash s_{2}-s_{1}}\) are non-singular, then
\begin{equation}
\left(P_{n}^{s_{1}}\otimes A + P_{n}^{s_{2}}\otimes B\right)^{-1} = \sum_{i=0}^{n-1} P_{n}^{i} \otimes  \Omega_{n,s_{2}-s_{1}}(i+s_{1}).
\end{equation}
\end{theorem}
\begin{proof}
Let \(s=s_{2}-s_{1}\). By (\ref{Ecuacion_Factorizacion_Bloques}) we have that \(\left(P_{n}^{s_{1}}\otimes A + P_{n}^{s_{2}}\otimes B\right)^{-1}\) is equal to
\[
\left(P_{\sigma_{n,s}} \otimes I_{r} \right) \left[I_{(n,s)} \otimes \left(I_{n\backslash s} \otimes A + P_{n\backslash s} \otimes B \right)\right]^{-1} \left(P_{\sigma_{n,s}}^{T} \otimes I_{r} \right) \left(P_{n}^{n-s_{1}} \otimes I_{r} \right).
\]
Thus, by Theorem \ref{Teorema_Principal_Bloques} and the definition of \(\Omega_{n,s}\), \(\left(P_{n}^{s_{1}}\otimes A + P_{n}^{s_{2}}\otimes B\right)^{-1}\) is
\[
\left(P_{\sigma_{n,s}} \otimes I_{r} \right) \left(\sum_{i=0}^{(n\backslash s)-1} I_{(n,s)} \otimes P_{n\backslash s}^{i} \otimes \Omega_{n\backslash s,1}(i) \right) \left(P_{\sigma_{n,s}}^{T} \otimes I_{r} \right) \left(P_{n}^{n-s_{1}} \otimes I_{r} \right).
\]
Then, by (\ref{Ecuacion_Potencias_P_ns1}) and (\ref{Ecuacion_Potencias_P_ns2})
\begin{align*}
\left(P_{n}^{s_{1}}\otimes A + P_{n}^{s_{2}}\otimes B\right)^{-1} &= \sum_{i=0}^{(n\backslash s)-1} P_{n}^{(i\, s)-s_{1}} \otimes \Omega_{n\backslash s,1}(i) \\
&= \sum_{i=0}^{(n\backslash s)-1} P_{n}^{(i\, s)-s_{1}} \otimes \Omega_{n,s}(i\, s) \\
&= \,\,\,\, \sum_{j=0}^{n-1} P_{n}^{j} \otimes \Omega_{n,s}(j+s_{1}).\qedhere
\end{align*}
\qedhere
\end{proof}

For example, for \(A\) and \(B\) given in (\ref{Ejem_Circ_Block1}), we have that \(\left(P_{8} \otimes A + P_{8}^{3} \otimes B \right)^{-1}\) is equal to
\[
P_{8}\otimes \Omega_{8,2}(2) + P_{8}^{3}\otimes \Omega_{8,2}(4) + P_{8}^{5}\otimes \Omega_{8,2}(6) + P_{8}^{7}\otimes \Omega_{8,2}(0),
\]
where
\begin{equation*}
\begin{array}{rcr}
\Omega_{8,2}(0)= \frac{1}{2519}
\left[
\begin{array}{cc}
2357 & 2206 \\
2206 & 2357
\end{array}
\right],& \phantom{ } & \Omega_{8,2}(2)= -\frac{1}{2519}
\left[
\begin{array}{rr}
1823 & 1219 \\
1219 & 1823
\end{array}
\right], \\
\phantom{ } \\
\Omega_{8,2}(4)= \frac{1}{2519}
\left[
\begin{array}{rr}
202 & -194 \\
-194 & 202
\end{array}
\right], & \text{and} &
\Omega_{8,2}(6)= -\frac{1}{2519}
\left[
\begin{array}{rr}
5508 & -4156 \\
-4156 & 5508
\end{array}
\right].
\end{array}
\end{equation*}

\bigskip

%
\section*{Acknowledgment}
Even though the text does not reflect it, we carry on many numerical experiments on \cite{sagemath}. They gives us the insight for this paper.

Funding: This work was partially supported by Universidad Nacional de San Luis, grant PROICO 03-0918, MATH AmSud, grant 21-MATH-05, and Agencia I+D+I grant PICT 2020-00549. Denis Videla was partially supported by CONICET and SECyT-UNC.  Andr\'es M. Encinas has been partially supported by Comisi\'{o}n Interministerial de Ciencia y Tecnolog\'{\i}a, grant MTM2017-85996-R.


\bibliographystyle{apalike}
\bibliography{TAGcitas}

\begin{thebibliography}{}

\bibitem[Bang-Jensen and Gutin, 2008]{bang2008digraphs}
Bang-Jensen, J. and Gutin, G.~Z. (2008).
\newblock {\em Digraphs: theory, algorithms and applications}.
\newblock Springer Science \& Business Media.

\bibitem[Bapat, 2014]{bapat2014graphs}
Bapat, R.~B. (2014).
\newblock {\em Graphs and Matrices}.
\newblock Springer.

\bibitem[Ba{\v{s}}i{\'c} and Ili{\'c}, 2009]{bavsic2009clique}
Ba{\v{s}}i{\'c}, M. and Ili{\'c}, A. (2009).
\newblock On the clique number of integral circulant graphs.
\newblock {\em Applied Mathematics Letters}, 22(9):1406--1411.

\bibitem[Ben-Israel and Greville, 2003]{ben2003generalized}
Ben-Israel, A. and Greville, T.~N. (2003).
\newblock {\em Generalized inverses: theory and applications}, volume~15.
\newblock Springer Science \& Business Media.

\bibitem[Brualdi and Cvetkovic, 2008]{brualdi2008combinatorial}
Brualdi, R.~A. and Cvetkovic, D. (2008).
\newblock {\em A combinatorial approach to matrix theory and its applications}.
\newblock CRC press.

\bibitem[Campbell and Meyer, 2009]{campbell2009generalized}
Campbell, S.~L. and Meyer, C.~D. (2009).
\newblock {\em Generalized inverses of linear transformations}.
\newblock SIAM.

\bibitem[Carmona et~al., 2015]{carmona2015inverses}
Carmona, A., Encinas, A.~M., Gago, S., Jim{\'e}nez, M.~J., and Mitjana, M.
  (2015).
\newblock The inverses of some circulant matrices.
\newblock {\em Applied Mathematics and Computation}, 270:785--793.

\bibitem[Davis, 2012]{davis2012circulant}
Davis, P.~J. (2012).
\newblock {\em Circulant matrices}.
\newblock American Mathematical Soc.

\bibitem[Encinas et~al., 2018]{encinas2019Circulant}
Encinas, A., Jaume, D., Panelo, C., and Pastine, A. (2018).
\newblock Drazin inverse of singular adjacency matrices of directed weighted
  cycles.
\newblock {\em Revista de la Uni\'{o}n Matem\'{a}tica Argentina}.

\bibitem[Fuyong, 2011]{fuyong2011inverse}
Fuyong, L. (2011).
\newblock The inverse of circulant matrix.
\newblock {\em Applied Mathematics and Computation}, 217(21):8495--8503.

\bibitem[Ili{\'c} and Ba{\v{s}}i{\'c}, 2011]{ilic2011new}
Ili{\'c}, A. and Ba{\v{s}}i{\'c}, M. (2011).
\newblock New results on the energy of integral circulant graphs.
\newblock {\em Applied Mathematics and Computation}, 218(7):3470--3482.

\bibitem[Ingleton, 1956]{ingleton1956rank}
Ingleton, A.~W. (1956).
\newblock The rank of circulant matrices.
\newblock {\em Journal of the London Mathematical Society}, 1(4):445--460.

\bibitem[Jiang et~al., 2014]{jiang2014invertibility}
Jiang, Z., Gong, Y., and Gao, Y. (2014).
\newblock Invertibility and explicit inverses of circulant-type matrices
  with-fibonacci and-lucas numbers.
\newblock In {\em Abstract and Applied Analysis}, volume 2014. Hindawi.

\bibitem[Minc, 1984]{minc1984permanents}
Minc, H. (1984).
\newblock {\em Permanents}, volume~6.
\newblock Cambridge University Press.

\bibitem[Petkovi{\'c} and Ba{\v{s}}i{\'c}, 2011]{petkovic2011further}
Petkovi{\'c}, M.~D. and Ba{\v{s}}i{\'c}, M. (2011).
\newblock Further results on the perfect state transfer in integral circulant
  graphs.
\newblock {\em Computers \& Mathematics with Applications}, 61(2):300--312.

\bibitem[Radi{\v{c}}i{\'c}, 2016]{radivcic2016k}
Radi{\v{c}}i{\'c}, B. (2016).
\newblock On k-circulant matrices (with geometric sequence).
\newblock {\em Quaestiones Mathematicae}, 39(1):135--144.

\bibitem[Rotman, 2012]{rotman2012introduction}
Rotman, J.~J. (2012).
\newblock {\em An introduction to the theory of groups}, volume 148.
\newblock Springer Science \& Business Media.

\bibitem[Sagan, 2013]{sagan2013symmetric}
Sagan, B.~E. (2013).
\newblock {\em The symmetric group: representations, combinatorial algorithms,
  and symmetric functions}, volume 203.
\newblock Springer Science and Business Media.

\bibitem[Sander, 2018]{sander2018kernel}
Sander, J. (2018).
\newblock On the kernel of integral circulant graphs.
\newblock {\em Linear Algebra and its Applications}, 549:79--85.

\bibitem[Shen et~al., 2011]{shen2011determinants}
Shen, S.-Q., Cen, J.-M., and Hao, Y. (2011).
\newblock On the determinants and inverses of circulant matrices with fibonacci
  and lucas numbers.
\newblock {\em Applied Mathematics and Computation}, 217(23):9790--9797.

\bibitem[So, 2006]{so2006integral}
So, W. (2006).
\newblock Integral circulant graphs.
\newblock {\em Discrete Mathematics}, 306(1):153--158.

\bibitem[Steeb and Hardy, 2011]{steeb2011matrix}
Steeb, W.-H. and Hardy, Y. (2011).
\newblock {\em Matrix Calculus and Kronecker Product: A Practical Approach to
  Linear and Multilinear Algebra Second Edition}.
\newblock World Scientific Publishing Company.

\bibitem[{The Sage Developers}, 2019]{sagemath}
{The Sage Developers} (2019).
\newblock {\em {S}ageMath, the {S}age {M}athematics {S}oftware {S}ystem
  ({V}ersion 0.4.2-8.6)}.
\newblock {\tt https://www.sagemath.org}.

\bibitem[Wang, 1997]{wang1997weighted}
Wang, G. (1997).
\newblock Weighted moore-penrose, drazin, and group inverses of the kronecker
  product $a \otimes b$, and some applications.
\newblock {\em Linear algebra and its applications}, 250:39--50.

\bibitem[Yazlik and Taskara, 2013]{yazlik2013inverse}
Yazlik, Y. and Taskara, N. (2013).
\newblock On the inverse of circulant matrix via generalized k-horadam numbers.
\newblock {\em Applied Mathematics and Computation}, 223:191--196.

\end{thebibliography}

\end{document}